\theoremstyle{plain}
\newtheorem{theorem}[subsection]{Theorem}
\newtheorem{lemma}[subsection]{Lemma}
\newtheorem{prop}[subsection]{Proposition}
\theoremstyle{definition}
\newtheorem{remark}[subsection]{Remark}
\newtheorem{note}[subsection]{Note}
\newcommand{\uu}{\cup}
\newcommand{\ii}{\cap}
\newcommand{\set}[1]{\{#1\}}
\newcommand{\ga}{\alpha}
\newcommand{\gb}{\beta}
\newcommand{\gd}{\delta}
\newcommand{\gl}{\lambda}
\newcommand{\gq}{\theta}
\newcommand{\gs}{\sigma}
\newcommand{\tit}{\textit}
\newcommand{\C}[1]{\mathcal{#1}}
\newcommand{\D}[1]{\mathbb{#1}}
\providecommand{\R}[0]{\mathbb{R}}
\newcommand{\te}{\text}
\newcommand{\ds}{\displaystyle}
\newcommand{\diff}[2]{\frac{\partial #1}{\partial #2}}
\begin{document}

\title{High precision numerical computation of principal points for univariate distributions}

\author{Santanu Chakraborty}
\address{School of Mathematical and Statistical Sciences\\
University of Texas Rio Grande Valley\\
1201 West University Drive\\
Edinburg, TX 78539-2999, USA.}
\email{santanu.chakraborty@utrgv.edu}

\author{ Mrinal Kanti Roychowdhury}
\address{School of Mathematical and Statistical Sciences\\
University of Texas Rio Grande Valley\\
1201 West University Drive\\
Edinburg, TX 78539-2999, USA.}
\email{mrinal.roychowdhury@utrgv.edu}

\author{Josef Sifuentes}
\address{School of Mathematical and Statistical Sciences\\
University of Texas Rio Grande Valley\\
1201 West University Drive\\
Edinburg, TX 78539-2999, USA.}
\email{josef.sifuentes@utrgv.edu}

\subjclass[2010]{60Exx, 94A34, 60-08.}

\keywords{Probability distribution, optimal sets, quantization error, Newton's method}
\thanks{}

\date{}
\maketitle

\pagestyle{myheadings}\markboth{S. Chakraborty, M.K. Roychowdhury, and J. Sifuentes}{Computation of principal points for univariate distributions}

\begin{abstract}
Principal points were first introduced by Flury: for a positive integer $n$, $n$ principal points of a random variable are the $n$ points that minimize the mean squared distance between the random variable and the nearest of the $n$ points. In this paper, we determine the $n$ principal points and the corresponding values of mean squared distance for different values of $n$ for some univariate absolutely continuous distributions.
\end{abstract}

\section{Introduction}

Quantization is a process of approximation with broad applications in signal processing and data compression (see \cite{DFG, GG, GKL, GN, Z1}). For rigorous mathematical treatment of the quantization theory one can see Graf-Luschgy's book (see \cite{GL1}). Quantization for probability distributions concerns the best approximation of a probability measure $P$ defined on a metric space by a measure supported on a finite number of points, or in other words, the best approximation of a $d$-dimensional random vector $X$ with distribution $P$ by a random vector $Y$ with at most $n$-values in its image. Let $\D R^d$ denote the $d$-dimensional Euclidean space equipped with the Euclidean norm $\|\cdot\|$, and let $P$ be a Borel probability measure on $\D R^d$. Let $0<r<+\infty$.  Then, the $n$th \textit{quantization
error} for $P$ of order $r$ is defined by
\begin{equation*} \label{eq1} V_{n, r}:=V_{n, r}(P)=\inf \Big\{\int \min_{a\in\alpha} \|x-a\|^r dP(x) : \ga \subset \mathbb R^d, \text{ card}(\ga) \leq n \Big\},\end{equation*}
where the infimum is taken over all subsets $\alpha$ of $\mathbb R^d$ with $\te{card}(\alpha)\leq n$. We assume that the probability distribution $P$ has finite second moment, i.e., $\int \| x\|^r dP(x)<\infty$. Then, there is some set $\alpha$ for
which the infimum is achieved (see \cite{GL1}). A set $\alpha$ for which the infimum is achieved, i.e.,
$V_{n, r}=\int \min_{a \in \ga} \|x-a\|^r dP(x),$ is called an \textit{optimal set of $n$-means}, or \tit{optimal set of $n$-quantizers} (of order $r$). Elements of an optimal set of $n$-means are refereed to as \tit{optimal centers}, or \tit{optimal quantizers}. The collection of all optimal sets of $n$-means for a probability measure $P$ is denoted by $\C C_n:=\C C_n(P)$. We assume that $P$ is continuous. Then, an optimal set of $n$-means always has exactly $n$ elements (see \cite{GL1}). Throughout the paper, we will keep $r=2$, and will denote the $n$th quantization error of order $2$ by $V_n:=V_{n,2}(P)$. If $P$ is continuous with finite second moment, and $r=2$, then the elements in an optimal set of $n$-means are also referred to as \tit{principal points}. In other words, principal points are defined as the set of $n$ points that minimizes the expected value of the squared distance between the random variable $X$ with distribution $P$ and the nearest of the points in the set (see \cite{F1}). 

Since the introduction of principal points by Flury (see \cite{F1}), there have been almost three decades of research on principal points. Flury considered $k$ principal points for a $p$-variate random vector, principal points for  univariate symmetric distributions, univariate and bivariate normal distributions, multivariate elliptical distributionsin the very first paper (see \cite{F1}). In his 1993 paper (see \cite{F2}), Flury redefined principal points in terms of self-consistent points (which we define below) and also described four methods of estimation of principal points which include maximum likelihood estimation, k-means algorithm etc. This study was carried forward by a number of papers in the 90s (see \cite{T1, T2, T3, T4, T5, T6, TF, TLF, Z2, Z3}) which dealt with principal points of strongly unimodal distributions, strongly symmetric multivariate distributions, uniqueness of $k$ principal points for univariate normal distributions, principal points and self-consistent points of $p$-variate elliptical distributions, method of determining principal points for univariate continuous distributions, uniqueness and symmetry of self-consistent points for univariate continuous distributions etc. Tarpey, Li, Zopp\`{e} and Flury himself were the main contributers. Then, in the new century, there have been more attempts of estimating principal points of which the first one that looks interesting is by Stamfer and Stanlober (see \cite{SS}). Tarpey remained active even in this century as he collaborated with Matsuura and Kurata (see \cite{MKT}). Matsuura and Kurata have several results on principal points for mixture distributions (see \cite{MK1, MK2, MK4}) and they also introduced $m$-dimensional $n$ principal points (see \cite{MK3}). In fact, principal points of univariate and multivariate location mixtures were first studied by Yamamoto and Shinozaki (see \cite{YS1, YS2}) and extended by Kurata (see \cite{K2}). In the recent years, Yamashita and Suzuki (see \cite{YS3, YS4}) have been very active on principal points related to binary distributions and they have also collaborated with Matsuura in this regard (see \cite{YMS1, YMS2}). 

Our goal in this paper is very simple. We aim to calculate $n$ principal points for univariate continuous distributions by a high precision algorithmic approach. To this end, we make some definition and notations. If $\ga$ is a set of $n$ principal points, then we call it an $n$-principal set. Thus,
\[V_n:=V_n(P)=E\Big(\min_{a\in \ga} \|X-a\|^2\Big)=\int\min_{a\in \ga}\|x-a\|^2dP(x),\]
where, by $E(X)$, it is meant the expected value of the random variable $X$. For a finite set $\ga$, the error $\int \min_{a \in \ga} \|x-a\|^2 dP(x)$ is often referred to as the \tit{cost} or \tit{distortion error} or \tit{$n$-th mean squared distance} for $\ga$, and is denoted by $V(P; \ga)$. Thus, if $\ga$ is a set of $n$ principal points, then $V_n:=V_n(P)=V(P; \ga)$. For a finite subset $\ga$ of $\D R^d$, the \tit{Voronoi region} generated by an element $a\in \ga$ is the set of all elements in $\D R^d$ that have $a$ as their nearest point in $\ga$, and is denoted by $M(a|\ga)$, i.e.,
\[M(a|\ga)=\set{x \in \D R^d : \|x-a\|=\min_{b \in \ga}\|x-b\|}.\]
If $P$ is a continuous probability measure, then the set of all boundary points of the Voronoi regions has probability measure zero, i.e., $P(\partial M(a|\ga))=0$ for all $a\in \ga$.
The set $\ga$ with respect to the probability distribution $P$ is called \tit{self-consistent} if for each $a\in \ga$,
\begin{equation*} \label{eq2}
a=E(X : X \in M(a|\ga)),
\end{equation*}
i.e., if each $a\in \ga$ equals the conditional expectation of the random variable $X$ given that $X$ is closest to $a$. Flury showed that principal points are self-consistent (see \cite{F2}), but that the converse is not always true (for example, see \cite{DR, R2}). For a given value of $n$, a distribution can have several different sets of $n$ self-consistent points, for example, for an absolutely continuous probability measure see \cite{T4}, and for a singular continuous probability measure see \cite{R1}. It is also possible to have more than one set of $n$ principal points, for example, for an absolutely continuous probability measure see \cite{DR, R2}, and for a singular continuous probability measure see \cite{GL2}. Notice that for a given $n\in \D N$, if several sets of $n$ self-consistent points exist, the self-consistent set(s) with smallest distortion error(s) will give the optimal set(s) of $n$-means.
Finding an optimal set of $n$-means for a univariate distribution is often a straight forward numerical problem if there exists a unique set of $n$ self-consistent points. The problem of finding an optimal set of $n$-means for a multivariate distribution is considerably much more difficult as the paper \cite{DR} illustrates. Optimal sets of $n$-means and the $n$th quantization errors for $n=1, 2, \cdots, 5$ for standard normal distribution were first calculated by Flury (see \cite[Table~1]{F1}). For $n=1, 2, \cdots, 8$, they were calculated by Graf-Luschgy (see \cite[Table~5.1]{GL1}). Later, for $n=1, 2, \cdots, 10$, they were calculated by Matsuura et al. (see \cite[Table~1]{MKT}). Optimal sets of $n$-means and the $n$th quantization errors for standard exponential distribution for $n=1, 2, \cdots, 6$, were calculated by Zopp\`{e} (see \cite[Table~4.2]{Z2}), and for $n=1, 2, \cdots, 8$, they were calculated by Graf-Luschgy (see \cite[Table~5.4]{GL1}).


In this paper, we present a high precision algorithm for calculating $n$ principal points and the $n$th quantization errors. We do this by applying Newton's algorithm to nonlinear equations and using adaptive quadrature routines built in to Matlab \cite{ML} whenever a probability distribution function does not allow for explicit integration. Furthermore, we formulate the linearization of the principal point nonlinear equation, which is solved at each iteration of Newton's algorithm.

\section{ preliminaries}

Throughout the paper we will assume that $P$ is a continuous Borel probability measure with univariate density function $f$.
A probability measure $P$ with density function $f$ is called \tit{symmetric} about $0$ if $f(-x)=f(x)$, i.e., if the pdf is an even function. On the other hand, a nonempty subset $\ga$ of $\D R$ is called \tit{symmetric} if $\ga=-\ga$. A non-negative function $f : \D R^d \to \D R^+$ is \tit{logarithmically concave}, or \tit{logconcave} for short, if its domain is a convex set, and if it satisfies $\log f(\gq x + ( 1 - \gq ) y)\geq \gq\log  f ( x ) +(1-\gq)\log f( y )$ for all $x, y \in \te{dom}f$  and $0 < \gq < 1$. Thus, we see that uniform distribution, normal distribution, double exponential distribution, and exponential distribution, in fact all the distributions considered in this paper, are all logconcave functions. The probability measure $P$ is called \tit{strongly unimodal} if $P=f\gl$, where $\gl$ is the Lebesgue measure, such that $I:=\set {h>0}$ is an open (possibly unbounded) interval and $\log f$ is concave on $I$.

The following theorem is known.

\begin{theorem} (see \cite[Theorem~5.1]{GL1}) \label{Th0}
Suppose that $P$ is strongly unimodal. Then, for every  $n\in \D N$, the $n$-principal set for $P$ is unique.
\end{theorem}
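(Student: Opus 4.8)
The plan is to reduce the statement to a uniqueness question for the stationary self-consistent configurations that can support an $n$-principal set, and then to resolve that question by a monotone ``shooting'' argument driven by the log-concavity of $f$.

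First I would record the one-dimensional structure of any $n$-principal set $\ga = \set{a_1 < a_2 < \cdots < a_n}$. Since $\ga$ is optimal it is self-consistent (Flury, \cite{F2}), so each $a_i$ is the conditional mean of $X$ on its Voronoi region $M(a_i\mid\ga)$; on the line these regions are the intervals cut out by the midpoints $t_i := \frac12(a_i + a_{i+1})$, with $t_0 = -\infty$ and $t_n = +\infty$. Writing $m(a,b) := \big(\int_a^b x f(x)\,dx\big)\big/\big(\int_a^b f(x)\,dx\big)$ for the conditional mean on $(a,b)$, an $n$-principal set is exactly a solution of the Lloyd--Max system $a_i = m(t_{i-1}, t_i)$ together with $t_i = \frac12(a_i + a_{i+1})$. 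Strong unimodality forces $f>0$ on the interval $I=\set{f>0}$, so every subinterval of positive length carries positive mass and $m$ is well defined and smooth in its endpoints.

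Next I would parametrize every candidate configuration by a single scalar, say the first center $a_1$, and propagate the two relations as a recursion: $a_1 = m(-\infty, t_1)$ fixes $t_1$; then $a_2 = 2t_1 - a_1$ from the midpoint relation; then $a_2 = m(t_1, t_2)$ fixes $t_2$; and so on, alternating the two relations until $a_n$ is produced. The configuration is a genuine self-consistent set precisely when the terminal condition $a_n = m(t_{n-1}, +\infty)$ holds. I would then introduce a terminal discrepancy $\Phi(a_1)$ measuring the failure of this last equation and aim to show that $\Phi$ is strictly monotone, so it has at most one zero; existence of at least one zero is already guaranteed, since an optimal set of $n$-means exists with exactly $n$ elements by \cite{GL1}.

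The crux, and the place where log-concavity is essential, is the monotonicity lemma: the elementary step of the recursion, regarded as the map $(t_{i-1}, a_i) \mapsto (t_i, a_{i+1})$, should be shown strictly increasing in each argument, so that increasing $a_1$ propagates to a strict increase of every later $t_i$ and $a_i$ and hence a strict, one-signed change in $\Phi$. The midpoint relation $a_{i+1} = 2t_i - a_i$ is elementary; the substantive point is the centroid step, where $t_i$ is defined implicitly by $m(t_{i-1}, t_i) = a_i$. Implicit differentiation expresses the change in $t_i$ through the partials of $m$, and the needed sign control reduces to properties of truncated log-concave densities: truncating a log-concave $f$ to a subinterval preserves log-concavity and the monotone likelihood ratio property, which forces the conditional mean to move monotonically with the endpoints and bounds its sensitivity. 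The main obstacle is precisely this quantitative estimate --- verifying that log-concavity makes these partial derivatives satisfy the inequalities that keep the composed recursion monotone rather than oscillatory. Once that is established, composing the steps shows $\Phi$ is strictly monotone; combined with the existence of an optimal set, the terminal equation has exactly one solution, and therefore the $n$-principal set is unique.
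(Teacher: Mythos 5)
The paper does not actually prove this theorem; it quotes it from \cite[Theorem~5.1]{GL1}, where uniqueness is obtained by showing that a strongly unimodal distribution has a \emph{unique} stationary (self-consistent) set of $n$ points, in the tradition of Fleischer, Trushkin and Kieffer. Your shooting argument is precisely the classical Fleischer route to that statement, so the strategy is viable --- but as written it contains two genuine gaps, one of them an outright false step.

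First, the monotonicity you assert for the elementary step is wrong as stated. The centroid step $(t_{i-1},a_i)\mapsto t_i$, where $t_i$ is defined by $m(t_{i-1},t_i)=a_i$, is strictly \emph{decreasing} in $t_{i-1}$: since $m$ is strictly increasing in both endpoints (for any positive density, no log-concavity needed), raising $t_{i-1}$ with $a_i$ held fixed forces $t_i$ to move down. So ``increasing $a_1$ propagates to a strict increase of every later $t_i$ and $a_i$'' cannot follow from componentwise monotonicity of the step map; one needs a joint induction controlling the relative sizes of the increments, e.g.\ $0<\delta a_i\le \delta t_i\le \delta a_{i+1}$ along the recursion. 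Second --- and this is the heart of the theorem --- the quantitative estimate that closes this induction is exactly what you defer as ``the main obstacle,'' and it is never supplied. The needed lemma is that for log-concave $f$ the truncated mean satisfies $\frac{\partial m}{\partial a}+\frac{\partial m}{\partial b}\le 1$, i.e.\ $f(a)\,(m-a)+f(b)\,(b-m)\le \int_a^b f(x)\,dx$ (with equality for the uniform density), together with its one-sided versions such as $0<\frac{d}{dt}E[X \mid X\le t]\le 1$. Granting it, implicit differentiation gives $\delta t_i=(\delta a_i-m_a\,\delta t_{i-1})/m_b\ge \delta a_i\,(1-m_a)/m_b\ge \delta a_i>0$ and $\delta a_{i+1}=2\,\delta t_i-\delta a_i\ge \delta t_i$, whence the terminal discrepancy $\Phi$ is strictly increasing and has at most one zero. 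Without it the recursion genuinely can oscillate --- and must be able to, since for densities that are not log-concave several distinct self-consistent sets exist (see \cite{T4}), so no soft argument can substitute for the estimate. As it stands, your proposal is an accurate roadmap of the classical proof rather than a complete proof: the decisive log-concavity inequality is named but not established, and the stated form of the monotonicity lemma would fail if used literally.
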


In the following note we give a method of obtaining an $n$-principal set for a univariate absolutely continuous probability measure with density function $f(x)$.
\begin{note}\label{note22}
Suppose that $\mathcal D$ is the domain of the probability density function $f(x)$, with limiting values
$$
c := \inf(\mathcal D),
\qquad
d := \sup(\mathcal D),
$$
and
let $\ga_n:=\set{a_1, a_2, \cdots, a_n}$ be an $n$-principal set for $P$ with probability density function $f(x)$ such that $-\infty<a_1<a_2<\cdots<a_n<\infty$. Write
\begin{align*}\label{eq234}
M(a_i|\ga_n):=\left\{\begin{array}{cc}
\left(c, \frac{a_1+a_2}{2}\right] & \te{ if } i=1, \\[1em]
\left[\frac{a_{i-1}+a_i}{2}, \frac{a_i+a_{i+1}}{2}\right] & \te{ if } 2 \leq i \leq n-1, \\[1em]
\left[\frac{a_{n-1}+a_n}{2}, d \right) & \te{ if } i=n,
\end{array}
\right.
\end{align*}
where $M(a_i|\ga_n)$ represent the Voronoi regions of $a_i$ for all $1\leq i\leq n$ with respect to the set $\ga_n$.
Since the principal points are the expected values of their own Voronoi regions, we have
\begin{equation} \label{eq00} a_i=E(X : X \in M(a_i|\ga_n))
 \end{equation}
 for all $1\leq i\leq n$.
 Solving the $n$ equations we can obtain the $n$-principal sets for $P$. Once, an $n$-principal set is known, the corresponding $n$th mean squared distance can easily be determined.
\end{note}
The following lemma will be convenient.
\begin{lemma} \label{prop0} Let $\set{a_1<a_2<\cdots<a_n}$ be an $n$-principal set for a continuous univariate probability distribution with density function $f(x)$, and $V_n(f(x))$ be the corresponding $n$-the mean squared distance. Let $\mu$ and $\gs$ be two arbitrary constants. Set $b_i:=\mu +a_i\gs$ for $1\leq i\leq n$. Then, $\set{b_1<b_2< \cdots< b_n}$ is an $n$-principal set for the univariate continuous probability measure with density function $f(\frac{x-\mu}{\gs})$, and the corresponding mean squared distance is $V_n(f(\frac{x-\mu}{\gs}))$ given by $V_n(f(\frac{x-\mu}{\gs}))=\gs^2 V_n(f(x))$.
\end{lemma}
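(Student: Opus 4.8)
The plan is to reduce everything to the affine change of variables $T(x)=\mu+\gs x$ and to track how the distortion functional behaves under it; since the quantization error is defined purely through squared distances and expectations, it ought to transform by a clean factor of $\gs^2$. I would first take $\gs>0$ (so that $T$ is strictly increasing and the ordering $b_1<b_2<\cdots<b_n$ is inherited from $a_1<a_2<\cdots<a_n$), let $X$ denote a random variable with density $f$, and set $Y:=\mu+\gs X=T(X)$, the random variable whose law is the measure named in the statement; here the factor $\tfrac1\gs$ appearing in its density $\tfrac1\gs f(\tfrac{x-\mu}{\gs})$ is exactly the Jacobian that makes the transformed density integrate to $1$.

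The key step is a pointwise identity. For an arbitrary set $\gb=\set{b_1',\dots,b_n'}$ of $n$ reals, let $\ga'=\setm{a_i'=\tfrac{b_i'-\mu}{\gs}}{1\le i\le n}=T^{-1}(\gb)$ be its preimage. Then for every realization,
\[
\min_{1\le i\le n}(Y-b_i')^2=\min_{1\le i\le n}(\mu+\gs X-\mu-\gs a_i')^2=\gs^2\min_{1\le i\le n}(X-a_i')^2 .
\]
Taking expectations yields $V(P;\gb)=\gs^2\,V(P;\ga')$ for the two measures involved, i.e. the distortion of any $n$-set against the law of $Y$ equals $\gs^2$ times the distortion of its $T$-preimage against the law of $X$. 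Note that this expectation identity is exact and needs no change-of-variables bookkeeping, since it follows from linearity of expectation applied directly to the random-variable identity above.

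Finally I would invoke the bijection $T$ on the collection of $n$-element subsets of $\D R$ to transfer optimality. Because $\gb\mapsto T^{-1}(\gb)$ is a bijection on all $n$-point sets and the distortion is merely rescaled by the constant $\gs^2$, minimizing $V(\,\cdot\,;\gb)$ over $\gb$ is equivalent to minimizing it over $\ga'$; hence $V_n(f(\tfrac{x-\mu}{\gs}))=\gs^2\,V_n(f(x))$, and $\gb$ is optimal for the law of $Y$ precisely when $T^{-1}(\gb)$ is optimal for the law of $X$. Applying this to the given optimal set $\set{a_1,\dots,a_n}$ shows that $T(\set{a_1,\dots,a_n})=\set{\mu+\gs a_1,\dots,\mu+\gs a_n}=\set{b_1,\dots,b_n}$ is an $n$-principal set with mean squared distance $\gs^2 V_n(f(x))$, as claimed. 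The only genuinely delicate points are presentational: one must reconcile the clean random-variable computation with the density notation $f(\tfrac{x-\mu}{\gs})$ of the statement (which suppresses the normalizing factor $\tfrac1\gs$), and, if one wishes to allow $\gs<0$, account for the order reversal by inserting the relabeling $b_i\leftrightarrow b_{n+1-i}$ so that the increasing indexing is preserved.
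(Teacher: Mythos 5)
Your proof is correct, and it takes a genuinely different---and in one respect stronger---route than the paper's. The paper argues through the self-consistency (centroid) equations of Note~\ref{note22}: writing $a_0=c$, $a_{n+1}=d$, it changes variables in the Voronoi-region integrals to verify that each $b_i=\mu+a_i\gs$ equals the conditional expectation of its own Voronoi region for the transformed density, and then repeats the change of variables in the distortion sum to obtain $V_n(f(\frac{x-\mu}{\gs}))=\gs^2 V_n(f(x))$. You instead prove the pointwise identity $\min_i (Y-b_i')^2=\gs^2\min_i(X-a_i')^2$ for $Y=\mu+\gs X$, take expectations to get $V(P_Y;\gb)=\gs^2\,V(P_X;T^{-1}\gb)$ for \emph{every} $n$-point set $\gb$, and transfer the infimum through the bijection $T$. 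This buys you something the paper's argument, as literally written, lacks: checking that the $b_i$ satisfy the self-consistency equations establishes only a necessary condition for optimality---the paper itself emphasizes, with references, that self-consistent sets need not be principal sets---so its concluding ``thus $\set{b_1<\cdots<b_n}$ forms an optimal set'' is a logical gap unless one supplies exactly the kind of global transfer you give; your distortion identity makes optimality transfer automatically in both directions, with no integration at all. You are also right about the two points the paper glosses over: the implicit assumption $\gs>0$ (the statement says ``arbitrary constants,'' but the ordering $b_1<\cdots<b_n$ and the orientation of the integral limits require it; your relabeling handles $\gs<0$), and the suppressed normalizing factor $\frac1\gs$ in $f(\frac{x-\mu}{\gs})$---indeed, a literal change of variables in the paper's distortion sum produces $\frac1{\gs^3}$ rather than the printed $\frac1{\gs^2}$, so the paper is tacitly working with the probability density $\frac1\gs f(\frac{x-\mu}{\gs})$, i.e.\ with the law of $Y$ exactly as you interpret it.
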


\begin{proof} Let $\set{a_1<a_2<\cdots<a_n}$ be an optimal set of $n$-means for the univariate continuous probability measure with density function $f(x)$. Write $a_0=c$ and $a_{n+1}= d$. Let $b_i=\mu +a_i\gs$ for $0\leq i\leq (n+1)$. Then, for $1\leq i\leq n$, by Note~\ref{note22}, we have
\begin{align*}
a_i=\frac {\int_{\frac{a_{i-1}+a_i}{2}}^{\frac{a_{i}+a_{i+1}}{2}} x f(x)dx}{\int_{\frac{a_{i-1}+a_i}{2}}^{\frac{a_{i}+a_{i+1}}{2}}  f(x) dx}=\frac {\int_{\frac{b_{i-1}+b_i}{2}}^{\frac{b_{i}+b_{i+1}}{2}} (\frac {x-\mu}{\gs}) f(\frac {x-\mu}{\gs}) dx}{\int_{\frac{b_{i-1}+b_i}{2}}^{\frac{b_{i}+b_{i+1}}{2}}   f(\frac {x-\mu}{\gs})dx} \te{ implying }  b_i=\frac {\int_{\frac{b_{i-1}+b_i}{2}}^{\frac{b_{i}+b_{i+1}}{2}} x f(\frac {x-\mu}{\gs})dx}{\int_{\frac{b_{i-1}+b_i}{2}}^{\frac{b_{i}+b_{i+1}}{2}}   f(\frac {x-\mu}{\gs})dx},
\end{align*}
i.e., $b_i=E(X : X\in M(b_i|\set{b_1, b_2, \cdots, b_n})$, where $X$ is a random variable with probability density function $f(\frac {x-\mu}{\gs})$. Thus, we see that $\set{b_1<b_2< \cdots< b_n}$  forms an optimal set of $n$-means for the univariate continuous probability measure with density function $f(\frac{x-\mu}{\gs})$. Moreover, by the definition of $n$th quantization error, we have
\begin{align*} V_n(f(x))=\sum_{i=1}^n \int_{\frac{a_{i-1}+a_i}{2}}^{\frac{a_{i}+a_{i+1}}{2}} (x-a_i)^2 f(x)dx=\frac 1{\gs^2} \sum_{i=1}^n\int_{\frac{b_{i-1}+b_i}{2}}^{\frac{b_{i}+b_{i+1}}{2}}(x-b_i)^2  f(\frac {x-\mu}{\gs})dx,\end{align*}
yielding $V_n(f(\frac{x-\mu}{\gs}))=\gs^2 V_n(f(x))$, which is the proposition.
\end{proof}

\begin{remark} By Lemma~\ref{prop0}, it is clear that for any positive integer $n$, to determine an $n$-principal set for any normal distribution or any exponential distribution, it is enough to determine an $n$-principal set for the standard normal distribution with density function $f(x)=\frac 1{\sqrt{2\pi}} e^{-\frac{x^2}{2}}$ for $-\infty<x<\infty$, or standard exponential distribution with density function $f(x)=e^{-x}$ for $x\geq 0$.
\end{remark}
\begin{lemma}\label{lemma1}
For a strongly unimodal continuous univariate symmetric (about $0$) distribution an $n$-principal set is symmetric (about $0$).
\end{lemma}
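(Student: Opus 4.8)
The plan is to exploit the uniqueness of the $n$-principal set guaranteed by Theorem~\ref{Th0} together with the fact that reflection through the origin preserves the distortion error when $f$ is even. First I would let $\ga_n=\set{a_1<a_2<\cdots<a_n}$ denote the (unique) $n$-principal set for $P$, and form its reflection $-\ga_n=\set{-a_n<-a_{n-1}<\cdots<-a_1}$. The strategy is to show that $-\ga_n$ is itself an $n$-principal set; uniqueness then forces $-\ga_n=\ga_n$, which is exactly the assertion that $\ga_n$ is symmetric about $0$.

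The key step is the computation $V(P;-\ga_n)=V(P;\ga_n)$. I would start from
\[V(P;-\ga_n)=\int \min_{a\in -\ga_n}(x-a)^2 f(x)\,dx,\]
apply the substitution $x\mapsto -x$, and use the hypothesis $f(-x)=f(x)$ to rewrite the integrand. Under this change of variables the factor $f(-x)$ becomes $f(x)$, while $\min_{a\in-\ga_n}(-x-a)^2=\min_{a\in-\ga_n}(x+a)^2$ becomes $\min_{b\in\ga_n}(x-b)^2$ after reindexing by $b=-a$ (as $a$ runs over $-\ga_n$, the value $b=-a$ runs over $\ga_n$). Thus the integral collapses to $\int \min_{b\in\ga_n}(x-b)^2 f(x)\,dx=V(P;\ga_n)$. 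Hence $-\ga_n$ attains the same cost as the optimal set $\ga_n$, and since that cost is the minimal possible value $V_n$, the set $-\ga_n$ is also optimal, i.e., an $n$-principal set.

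Finally, since $P$ is strongly unimodal, Theorem~\ref{Th0} tells us the $n$-principal set is unique, so the two optimal sets must coincide: $-\ga_n=\ga_n$. This means $\ga_n$ is symmetric about $0$, as required. The only point that needs care—and which I regard as the main (though modest) obstacle—is the bookkeeping in the change of variables: one must check that reflecting the quantizer set and simultaneously reflecting the domain leaves the pointwise nearest-neighbor cost $\min_{a}(x-a)^2$ genuinely invariant. The behavior at the Voronoi boundaries causes no difficulty, since by continuity of $P$ these boundaries have $P$-measure zero and therefore do not affect the integral.
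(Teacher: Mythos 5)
Your proposal is correct and follows essentially the same route as the paper: show that reflection through the origin preserves the distortion error, so that $-\ga$ is also an $n$-principal set, and then invoke the uniqueness guaranteed by Theorem~\ref{Th0} for strongly unimodal distributions to conclude $\ga=-\ga$. The only (cosmetic) difference is that you establish $V(P;-\ga)=V(P;\ga)$ by a single global change of variables $x\mapsto -x$ applied to $\int\min_{a}(x-a)^2\,dP$, whereas the paper decomposes the cost over the Voronoi intervals and applies the reflection identity interval by interval; your version is, if anything, slightly cleaner.
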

\begin{proof}
Let $P$ be a strongly unimodal continuous univariate symmetric distribution about $0$. Let $\ga:=\set{a_1<a_2<\cdots<a_n}$ be an $n$-principal set for $P$ with the $n$th mean squared distance $V_n(P)$, i.e., $V_n(P)=V(P; \ga)$. Since the principal points are the expected values of their own Voronoi regions, we have $a_0<a_1<a_2<\cdots<a_n<a_{n+1}$, where $a_0=-\infty$ and $a_{n+1}=\infty$. Again, $P$ is symmetric, and so for any $s, t\in \D R\uu \set{-\infty, \infty}$ and $a\in \D R$, we have
\[\int_{-s}^{-t}(x+a)^2 dP=\int_{s}^t(x-a)^2 dP.\]
Hence, by the definition of mean squared distance,
\begin{align*} \label{eq41}
V(P; \ga)& =\sum_{i=1}^{n}\int_{\frac{a_{i-1}+a_{i}}{2}}^{\frac{a_i+a_{i+1}}{2}}(x-a_i)^2dP
=\sum_{i=1}^{n}\int_{-\frac{a_{i-1}+a_{i}}{2}}^{-\frac{a_i+a_{i+1}}{2}}(x+a_i)^2dP\\
&\geq V(P; -\ga)=\sum_{i=1}^{n}\int_{-\frac{a_{i-1}+a_{i}}{2}}^{-\frac{a_i+a_{i+1}}{2}}(x+a_i)^2dP=\sum_{i=1}^{n}\int_{\frac{a_{i-1}+a_{i}}{2}}^{\frac{a_i+a_{i+1}}{2}}(x-a_i)^2dP\geq V(P; \ga),
\end{align*}
and thus, $V_n(P)=V(P; \ga)=V(P; -\ga)$, which yields the fact that $-\ga$ is an $n$-principal set for $P$ whenever $\ga$ is an $n$-principal set for $P$. Since $P$ is strongly unimodal, by Theorem~\ref{Th0}, the $n$-principal set is unique, and so $\ga=-\ga$, i.e., an $n$-principal set is symmetric.
\end{proof}

We now give the following lemma (also see \cite[Remark~5.3]{GL1}).
\begin{lemma} \label{lemma2}  Let $P$ be a continuous univariate symmetric distribution. Let $Q:=P(\cdot|[0, \infty))$, the one tailed version of $P$. Let $n=2k$ for some positive integer $k$. Then, $V_{n}(P)=V_k(Q)$, in other words,  $\ga\uu (-\ga)\in \C C_{n}(P)$ if and only if $\ga \in \C C_k(Q)$.
\end{lemma}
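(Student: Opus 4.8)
The plan is to reduce the whole statement to a single ``folding'' identity relating the cost of a symmetric $n$-set under $P$ to the cost of its positive half under $Q$, and then to read off the value equality and both implications from it. First I would record the two structural facts that make $Q$ the ``absolute value'' of $P$: since $P$ is continuous, $P(\set{0})=0$, and since $P$ is symmetric, $P([0,\infty))=\frac12$, so $Q$ has density $2f$ on $[0,\infty)$, where $f$ is the density of $P$. Now fix $\ga=\set{a_1<\cdots<a_k}\ci(0,\infty)$ and put $\gb=\ga\uu(-\ga)$, a set of exactly $n=2k$ points. The key elementary observation is that for $x\ge 0$ the nearest point of the symmetric set $\gb$ is one of the positive $a_i$: the nearest negative candidate $-a_1$ satisfies $x+a_1\ge|x-a_1|$, so a positive point is always at least as close. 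Hence $\min_{b\in\gb}(x-b)^2=\min_{a\in\ga}(x-a)^2$ for $x\ge0$, and by the symmetry of both $\gb$ and $P$ the same holds after reflection for $x<0$. Folding the integral over $\D R$ onto $[0,\infty)$ and using that $Q$ has density $2f$ there then gives
\[
V(P;\gb)=2\int_0^\infty\min_{a\in\ga}(x-a)^2f(x)\,dx=\int_0^\infty\min_{a\in\ga}(x-a)^2\,dQ(x)=V(Q;\ga).
\]

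\textbf{Upper bound and the forward implication.} Taking $\ga$ to be an optimal $k$-set for $Q$ in this identity immediately yields $V_n(P)\le V(P;\gb)=V(Q;\ga)=V_k(Q)$. This already settles the implication $\gb=\ga\uu(-\ga)\in\C C_n(P)\Rightarrow\ga\in\C C_k(Q)$: if $\gb$ is optimal for $P$, then $V(Q;\ga)=V(P;\gb)=V_n(P)\le V_k(Q)\le V(Q;\ga)$, where the last inequality is the definition of $V_k(Q)$ as an infimum, so every term is equal and $\ga$ is optimal for $Q$. Notably, no symmetry of the optimal sets is needed for this direction.

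\textbf{Lower bound and the reverse implication — the main obstacle.} What remains is the reverse inequality $V_n(P)\ge V_k(Q)$; granting it, the identity turns an optimal $\ga\in\C C_k(Q)$ into $V(P;\ga\uu(-\ga))=V_k(Q)=V_n(P)$, so $\ga\uu(-\ga)\in\C C_n(P)$, and the value equality $V_n(P)=V_k(Q)$ follows as well. The honest difficulty is that one \emph{cannot} prove $V_n(P)\ge V_k(Q)$ by folding an arbitrary optimal set $\gg$: folding its $2k$ points produces up to $2k$ points on $[0,\infty)$ and only gives $V_n(P)\ge V_{2k}(Q)$, which points the wrong way since $V_{2k}(Q)\le V_k(Q)$. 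The inequality therefore rests on exhibiting a \emph{symmetric} optimal $n$-set for $P$, for if $\gg=\ga'\uu(-\ga')$ is optimal then the folding identity gives $V_n(P)=V(P;\gg)=V(Q;\ga')\ge V_k(Q)$. For all distributions treated here $P$ is strongly unimodal, so Lemma~\ref{lemma1} (through the uniqueness in Theorem~\ref{Th0}) forces the optimal $n$-set to satisfy $\gg=-\gg$ and hence supplies exactly such a symmetric optimizer. The one genuinely delicate point, which I would isolate as the crux, is guaranteeing a symmetric optimal set for a general continuous symmetric $P$ without invoking unimodality.
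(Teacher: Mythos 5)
Your proposal is correct and follows essentially the same route as the paper: the same folding identity (using the symmetry of $P$ and the normalization $P([0,\infty))=\frac12$) gives $V_n(P)\le V_k(Q)$ by lifting an optimal $k$-set for $Q$ to the symmetric set $\ga\uu(-\ga)$, and the reverse inequality is obtained by intersecting a symmetric optimal $n$-set for $P$ with $[0,\infty)$, exactly as in the paper's converse step. The crux you isolate --- guaranteeing a \emph{symmetric} optimal $n$-set for a general continuous symmetric $P$ --- is precisely the step the paper passes over silently (its proof simply begins ``let $\gb\in\C C_n(P)$ be symmetric,'' while Lemma~\ref{lemma1} supplies such a set only under the strong unimodality hypothesis that Lemma~\ref{lemma2} does not assume), so your version is, if anything, the more careful of the two.
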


\begin{proof} Let $n=2k$ for some $k\in \D N$. Let $\ga:=\set{a_1<a_2< \cdots< a_k}\in \C C_k(Q)$. Since the principal points are the expected values of their own Voronoi regions, we have $0<a_1<a_2\cdots<a_k<\infty$. Again, $P$ is symmetric, and so for any $s, t\in \D R\uu \set{-\infty, \infty}$ and $a\in \D R$, we have
\[\int_{-s}^{-t}(x+a)^2 dP=\int_{s}^t(x-a)^2 dP.\]
Hence, by the definition of $n$-th mean squared distance,
\begin{align*}
V_n(P)\leq \int_{-\infty}^\infty \min_{a\in\ga\uu (-\ga)}(x-a)^2 dP=2\int_0^\infty\min_{a\in \ga} (x-a)^2 dP=\frac{1}{P([0, \infty))}\int_0^\infty\min_{a\in \ga} (x-a)^2 dP
\end{align*}
implying $V_n(P)\leq V_k(Q)$. Conversely, let $\gb\in \C C_n(P)$ be symmetric. Write $\ga:=\gb\ii [0, \infty)$. Then, $\te{card}(\ga)=k$. Thus,
\[V_{n}(P) =\int\min_{b\in \gb}(x-b)^2 dP=2 \int\min_{b\in \gd}(x-b)^2 dP=\int\min_{b\in \ga}(x-a)^2 dQ\geq V_k(Q).\]
Thus, we see that  $V_{n}(P)=V_k(Q)$, in other words,  $\ga\uu (-\ga)\in \C C_{n}(P)$ if and only if $\ga \in \C C_n(Q)$, which is the lemma.
\end{proof}

By Lemma~\ref{lemma1} and Lemma~\ref{lemma2}, the following proposition follows.
\begin{prop} Let $P$ be a strongly unimodal continuous univariate symmetric distribution.
Let $\set{a_1<a_2<\cdots<a_{n}}$ be an $n$-principal set for $P$. Then, if $n=2k$, we have $a_1=-a_{2k}, \, a_2=-a_{2k-1}, a_3=-a_{2k-2}, \, \cdots,  \, a_k=-a_{k+1}$. If $n=2k+1$, we have $a_1=-a_{2k+1}, \, a_2=-a_{2k}, a_3=-a_{2k-1}, \, \cdots,  \, a_k=-a_{k+2}$, and $a_{k+1}=0$.
\end{prop}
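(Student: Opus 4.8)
The plan is to deduce the proposition almost entirely from Lemma~\ref{lemma1}, which already supplies the substantive content: an $n$-principal set for a strongly unimodal symmetric distribution is symmetric about $0$, i.e. $\ga=-\ga$. Once that symmetry is in hand, everything reduces to reconciling the two induced orderings of a single finite set, which is elementary bookkeeping. So the first step is simply to invoke Lemma~\ref{lemma1} and record $\ga=-\ga$, where $\ga=\set{a_1<a_2<\cdots<a_n}$.

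Next I would write $-\ga$ in increasing order. Because negation is an order-reversing bijection of $\D R$, the set $-\ga$ enumerated in strictly increasing order is $\set{-a_n<-a_{n-1}<\cdots<-a_1}$. Now $\ga$ and $-\ga$ are the same set of $n$ distinct reals, and I have displayed each of them in strictly increasing order; two equal finite sets have identical sorted enumerations, so their $i$th smallest entries must coincide for every $i$. Matching the $i$th entry of $\set{a_1<\cdots<a_n}$ against the $i$th entry of $\set{-a_n<\cdots<-a_1}$ therefore gives the single relation $a_i=-a_{n+1-i}$ for all $1\le i\le n$.

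Finally I would specialize this relation to the two parities. For $n=2k$ it reads $a_i=-a_{2k+1-i}$, which unwinds to $a_1=-a_{2k},\ a_2=-a_{2k-1},\ \cdots,\ a_k=-a_{k+1}$, exactly the stated pairing. For $n=2k+1$ it reads $a_i=-a_{2k+2-i}$, giving $a_1=-a_{2k+1},\ a_2=-a_{2k},\ \cdots,\ a_k=-a_{k+2}$; here the central index $i=k+1$ is the unique fixed point of the involution $i\mapsto n+1-i$, so it produces $a_{k+1}=-a_{k+1}$, forcing $a_{k+1}=0$.

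I do not expect a genuine obstacle: all the depth (uniqueness via Theorem~\ref{Th0}, together with the change-of-variables/cost-comparison argument establishing $V(P;\ga)=V(P;-\ga)$) is already encapsulated in Lemma~\ref{lemma1}. The only point that deserves a moment of care is the order-matching step — namely observing that negation reverses order, so the $i$th smallest element of $-\ga$ equals $-a_{n+1-i}$, and that equality of the sets then forces $a_i=-a_{n+1-i}$. After that, the even and odd cases are pure substitution, and the vanishing of the middle point in the odd case is an immediate consequence of the fixed index.
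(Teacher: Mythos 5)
Your proposal is correct and matches the paper's approach: the paper gives no explicit proof, simply asserting that the proposition ``follows'' from its two preceding lemmas, and your argument supplies exactly the elementary order-matching step (negation reverses order, equal finite sets have equal sorted enumerations, hence $a_i=-a_{n+1-i}$, with the fixed index forcing $a_{k+1}=0$ when $n$ is odd) that the paper leaves implicit. The only divergence is that the paper cites both Lemma~\ref{lemma1} and Lemma~\ref{lemma2}, whereas you correctly observe that Lemma~\ref{lemma1} alone (which already packages uniqueness via Theorem~\ref{Th0}) suffices for this statement, so your derivation is, if anything, slightly more economical.
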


\section{Numerical Methods}
In all the problems we consider here, the domain, $\mathcal D \subset \R$, of the probability density function is either $[0, \, 1]$, $[0, \, \infty)$, or $(-\infty, \, \infty)$, though the methodology described in this section does not require such standard intervals.
For the sake of clarity, we denote the endpoints of the regions $M(a_j | \alpha_n)$ as
$$
m_j := \left\{\begin{array}{lr}
c & \te{ if } j = 0, \\
\ds {a_j + a_{j+1} \over 2} & \te{ if } 1 \le j \le n-1, \\
d & \te{ if } j = n,
\end{array}
\right.
$$
which depend continuously on the array $\alpha_n$.
We seek to solve, numerically, the set of $n$, nonlinear equations.
\begin{eqnarray}
a_j = E(X: X \in M(a_j | \alpha_n)) &:=&
{e(M(a_j | \alpha_n)) \over P(M(a_j | \alpha_n))}
\qquad \qquad
\mbox{ for } j = 1, \, 2, \, \ldots, \, n,
\label{eqn.nls}
\end{eqnarray}
where the unconditional expected value function $e(M(a_j | \alpha_n))$, and  probability function $P(M(a_j | \alpha_n))$ are defined by
\begin{eqnarray*}
e(M(a_j | \alpha_n))  &:=& \int_{m_{j-1}}^{m_j} x f(x) \, dx, \te{ and }   \\
P(M(a_j | \alpha_n)) &:=& \int_{m_{j-1}}^{m_j} f(x) \, dx.
\end{eqnarray*}
Solving the nonlinear system in (\ref{eqn.nls}) is equivalent to finding the root of
the function $g: \R^n \to \R^n$ whose $j^{th}$ entry is defined as the difference:
\begin{eqnarray}
g_j(\alpha) &:=& 
a_j \int_{m_{j-1}}^{m_j} f(x) \, dx - \int_{m_{j-1}}^{m_j} x f(x) \, dx
\qquad \qquad \mbox{ for } j = 1, \, 2, \ldots, n.
\label{eqn.g}
\end{eqnarray}
The entries of the solution vector $\alpha_n \in \R^n$ are $n$ principal points.
Thus, we can apply Newton's algorithm for computing roots of nonlinear systems (for example, see \cite{K1} for a thorough guide to Newton's method) to obtain high precision numerical solutions to the optimal sets.
Given an initial vector $\alpha_0 \in \R^n$, the Newton iteration for finding the root to $g(\alpha)$ takes the form
\begin{eqnarray}
\alpha_{new} &=& \alpha_{old} + J(\alpha_{old})^{-1}  g(\alpha_{old}),
\label{eqn.newton}
\end{eqnarray}
where $J: \R^n \to \R^{n\times n}$ is the Jacobian matrix, whose entries are defined as
$J_{j,k} := \partial g_j / \partial a_k.$
The iteration is continued until the residual $\| g(\alpha_{new}) \|$ is sufficiently small. Note that the function $g_j(\alpha)$, for $j = 1, \, 2, \ldots, n$ depends only on $a_{j-1}, \, a_j, \, a_{j+1}$, indicating that the matrix $J(\alpha)$ is always tridiagonal.
Let $\ell_j$ describe the distance between consecutive points:
$$
\ell_j := a_{j+1} - a_j
\qquad \te{ for } j = 1, \, 2, \, \ldots, n-1.
$$
Then, the diagonal entries are given by
\begin{eqnarray*}
J_{j,j}(\alpha) &:=& \diff{g_j}{a_j} =
\int_{m_{j-1}}^{m_j} f(x) \, dx -
f\left(m_{j-1}\right) {\ell_{j-1} \over 4} -
f\left(m_j\right) {\ell_j \over 4}
\end{eqnarray*}
For the sake of simplicity, define $\ell_0 = \ell_n = 0$. In addition to being tridiagonal, the Jacobian is also symmetric: $J_{j+1,j} := \partial g_{j+1} / \partial a_j = \partial g_{j} / \partial a_{j+1} =: J_{j,j+1}$. The off-diagonal entries are given by
\begin{eqnarray*}
J_{j+1,j} = J_{j,j+1} =
- f\left(m_j \right) {\ell_j \over 4}
\qquad \qquad
\mbox{ for } j = 1, \, 2, \ldots, \, n-1
\end{eqnarray*}
Note that when $n = 1$, there is in fact no system to solve, as $\alpha_1$ is given explicitly by the expected value over the entire domain:
$$
\alpha_1 = \int_{\mathcal D} x f(x) \, dx. \\
$$

\subsection{Symmetric Probability Densities} \label{subsec.sym}
Consider the case that the domain of the probability distribution function $f(x)$ is symmetric about $0$, that is $c = -d$ and $f(x)$ is even.
In such a case, the number of unknowns can be reduced by taking into account this symmetry. Suppose that $\alpha_n \in \R^n$ is a root of the function g defined in (\ref{eqn.g}), then $a_j = -a_{n+1-j}$. Thus, we can reduce the size of the problem by half. That is, solving (\ref{eqn.g}) for a root $\alpha_n$ is equivalent to solving the same problem for $n/2$ or $(n-1)/2$ points, depending on whether $n$ is even or odd, over the half domain $[0, d)$. We outline here how the reformulation changes depending on whether $n$ is even or odd and the special cases of $n = 2$ and $n=3$.

${\bf n = 2}$: In the case of $n = 2$, the problem is solved by $\alpha_2 = \{ -\varphi, \, \varphi \}$, where $\varphi$ is twice the expected value of half the domain:
$$
\varphi = 2 \int_0^d x f(x) \, dx. \\
$$

${\bf n = 3}$: In the case of $n = 3$, the problem is solved by $\alpha_3 = \{ -\varphi, \, 0, \, \varphi \}$, where $\varphi$ is the root of the scalar valued function
$$
g(a) = a \int_{a/2}^d f(x) \, dx - \int_{a/2}^d x f(x) \, dx.
$$
Here, the Newton iteration for finding a root of the scalar valued function $g(a)$ takes the form
\begin{eqnarray*}
a_{new} &=& a_{old} - { g(a_{old}) \over g'(a_{old}) },
\end{eqnarray*}
where
\begin{eqnarray*}
g'(a) &=&
\int_{a/2}^b f(x) \,dx - {a \over 4} f\left( a  / 2 \right).
\end{eqnarray*}

${\bf n > 2}$ {\bf and even}: For $n$ even, let  $\widetilde a_j := a_{{n \over 2} + j}$ for $j = 1, \, 2, \ldots, n/2$. Then, the problem of computing principal points over the domain $\mathcal D$ is equivalent to solving (\ref{eqn.nls}) over the half domain $[0, \, d]$ for $\widetilde \alpha_{n \over 2} = \{\widetilde a_1, \, \widetilde a_2, \, \ldots, \, \widetilde a_{n \over 2} \}$. The principal points are constructed by
$$
\alpha_n = \{ - \widetilde a_{n \over 2}, \, -\widetilde a_{{n \over 2} - 1}, \, \ldots, \, -\widetilde a_1, \, \widetilde a_1, \ldots, \, \widetilde a_{{n \over 2} - 1}, \, \widetilde a_{n \over 2} \}.
$$

An example of this can be seen in the positive entries of the solution $\alpha_n$, when $n$ is even, for the double sided exponential distribution with domain $(-\infty, \, \infty)$ (see Tables \ref{tab.exp1} and \ref{tab.exp2}). These positive entries are exactly the solution $\alpha_{n \over 2}$ of the single sided exponential distribution with domain $[0, \infty)$.

${\bf n > 3}$ {\bf and odd}: For $n$ odd, let $\widetilde a_j := a_{{n+1 \over 2} + j}$ for $j = 1, \, 2, \ldots, (n-1)/2$ (in this case, $a_{n+1 \over 2} = 0$). Then, one can compute the principal points over the domain $\mathcal D$ by setting the nonlinear system in (\ref{eqn.nls}) over the variable domain $[m_0, \, d]$ for $\widetilde \alpha_{n-1 \over 2}$, where the left endpoint of the domain varies: $m_0 = \widetilde a_1/2$. This implies that the top left entry of the Jacobian matrix must be adjusted to
$$
J_{1,1}(\widetilde \alpha) 
= \int_{m_0}^{m_1} f(x) \, dx - f(m_0) {\widetilde a_1 \over 4} - f(m_1) {\ell_1 \over 4}.
$$
The principal points are then constructed by
$$
\alpha_n = \{ - \widetilde a_{n-1 \over 2}, \, -\widetilde a_{{n-1 \over 2} - 1}, \, \ldots, \, -\widetilde a_1, \, 0, \,  \widetilde a_1, \ldots, \, \widetilde a_{{n-1 \over 2} - 1}, \, \widetilde a_{n-1 \over 2} \}.
$$



\section{Numerical Results}
All of the results in this section were computed by applying the Newton iteration (\ref{eqn.newton}) until the maximal entry of the residual reached a tolerance of $10^{-15}$. That is, for the computed principal points $\alpha_n$, the following condition is met $| g_j(\alpha_n) | < 10^{-15}$ for each entry $j = 1, \, 2, \, \ldots, \, n$. In the tables in this section, only the first five digits of each principal points is shown, for $n = 1, \, 2, \, \ldots, \, 16$. It is possible to compute these values for significantly higher values of $n$ to the same high precision.
If the integral in the probability or expected value function cannot be explicitly computed for a given distribution function, then Matlab's built in command {\tt integral} is used, which approximates definite integrals using adaptive quadrature routines \cite{ML}.


For each of the Newton iteration runs, we used the initial distributions of $a_j = 1 + (j-1)/(n-1)$ if $\mathcal D = [0, \, \infty)$ and $a_j = j(d-c)/(n+1)$ if $\mathcal D = [c, \, d]$, 
for $j = 1, \, 2, \, \ldots, \, n$. For the case of $n=3$, the problem is reformulated as described in section~\ref{subsec.sym} and the initial point $a = 1$ if $d = \infty$ and $a = d/2$, otherwise.

\subsection{Computational Performance}
We show in the left side of Figure~\ref{fig.iter} that the Newton iteration requires relatively few iterations to converge and this method for approximating principal points to high precision is very efficient. While the number of iterations required for convergence grows with $n$, we see in the right side of Figure~\ref{fig.iter} that this growth is very mild.



%
%

\begin{figure}[tbhp]
\centering
\includegraphics[width=2.85in]{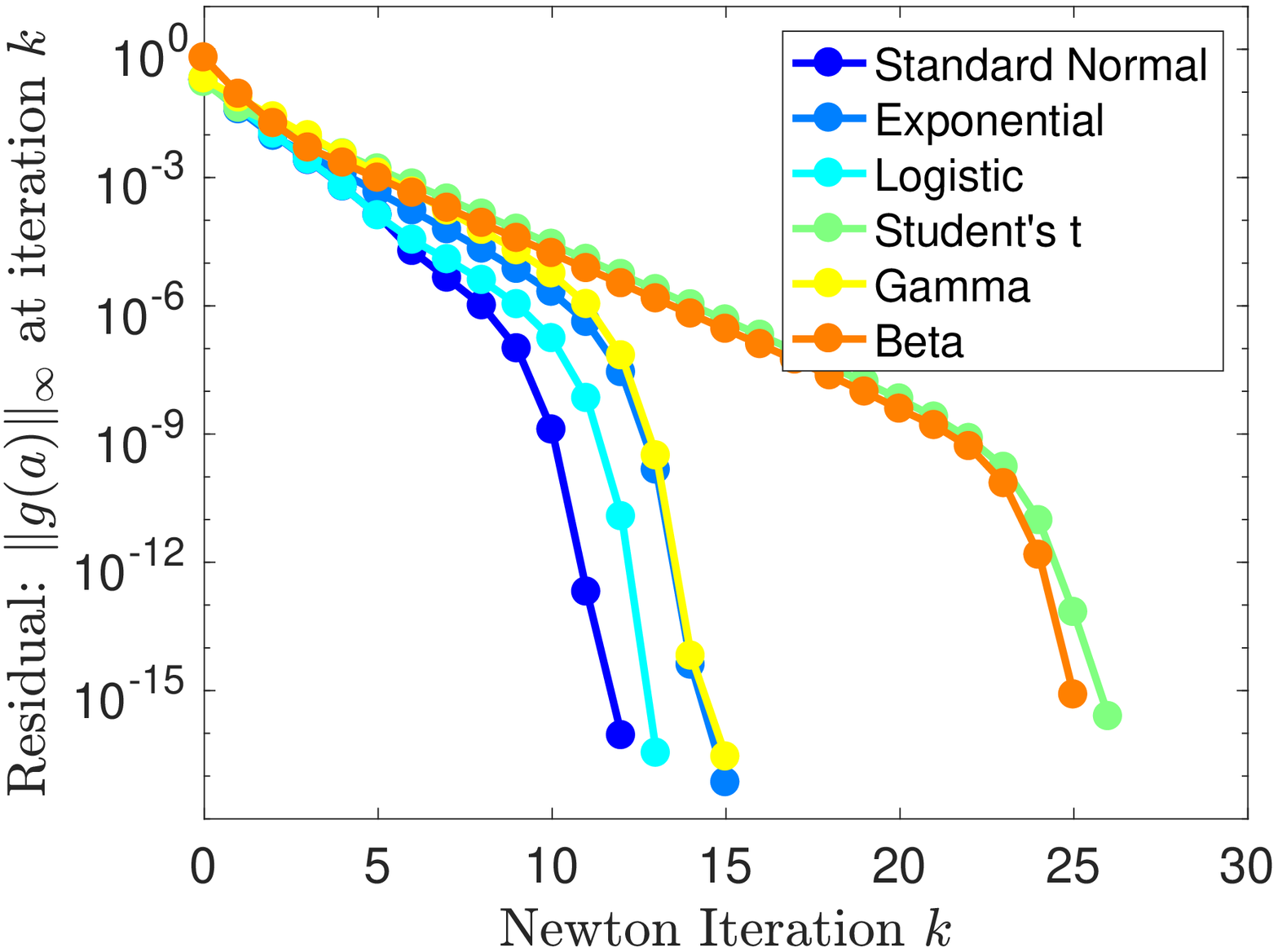} \hspace{1em}
\includegraphics[width=2.85in]{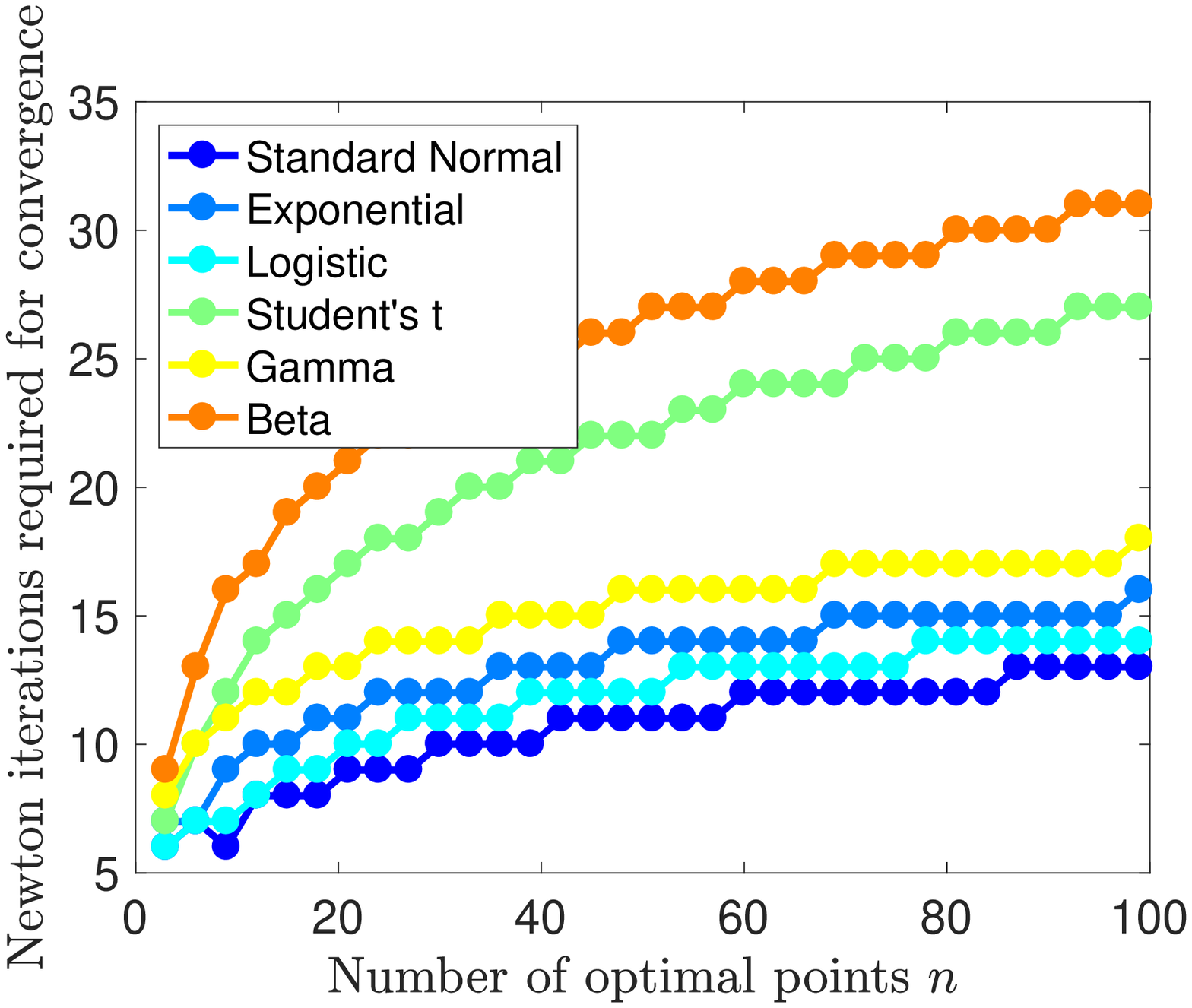}
\caption{{\bf Left:} Residuals for several probability distributions to compute 100 principal points. {\bf Right} Iterations required for a tolerance of $10^{-15}$ for several probability distributions as a function of the number of principal points $n$.}
\label{fig.iter}
\end{figure}

The computational efficiency of this method allows us to explore ideas in optimal quantizers. One such avenue of exploration is the idea of convergence of probability distribution. For example, it is well known that student's t-distribution, defined as
\begin{eqnarray*}
f_k(x) =
{
\Gamma \left( {k + 1 \over 2} \right) \over
\sqrt{k \pi} \, \Gamma \left( { k \over 2} \right)
}
\left( 1 + {x^2 \over k} \right)^{-{k+1 \over 2}}
\end{eqnarray*}
converges weakly to the normal distribution:
\begin{eqnarray*}
f(x) &=&
{1 \over
\sqrt{2 \pi} }
e^{-{x^2 \over 2}}.
\end{eqnarray*}
Does it follow then that the principal points, $\alpha_n(k)$, of student's t-distribution converge to those of the normal distribution? We investigate numerically whether that appears to be the case. We demonstrate in Figure~\ref{fig.TtoNormal} that this assertion appears to be true. In fact, this is supported by \cite{P1, P2}:
\begin{theorem} (see \cite{P2}, 6 Corrolary)
Suppose that the probability distribution $f_k(x)$ converges weakly to $f(x)$ in the limit as $k \to \infty$ and that $\alpha_n(k)$ are the principal points of the distribution $f_k(x)$ and $\beta_n$ are the principal points of $f(x)$. Then, after a suitable labeling, $\alpha_n(k)$ converges to $\beta_n$ in the limit as $k \to \infty$ for all values of $n$.
\end{theorem}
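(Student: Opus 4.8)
The plan is to read this as a convergence-of-minimizers statement for the quantization functional and to establish it in three moves: confine the minimizers to a fixed compact set, upgrade weak convergence of the measures to convergence of the cost functional, and then use uniqueness of the optimal set to turn subsequential convergence into genuine convergence. Write $P_k$, $P$ for the measures with densities $f_k$, $f$, and recall from the preliminaries the cost notation $V(P;\gamma)=\int\min_{a\in\gamma}|x-a|^2\,dP(x)$, so that $\alpha_n(k)$ minimizes $V(P_k;\cdot)$ and $\beta_n$ minimizes $V(P;\cdot)$ over configurations of at most $n$ points. The first step is a confinement estimate: I would show there is a compact interval $K\subset\D R$, independent of $k$ for $k$ large, containing every point of $\alpha_n(k)$. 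This follows from the elementary observation that a quantizer placed far from the bulk of the mass only increases the cost, made quantitative by a uniform second-moment bound $\sup_k\int|x|^2\,dP_k<\infty$ (which holds in the application, since the Student's $t$ variance $k/(k-2)\to1$). Compactness then guarantees that every subsequence of $(\alpha_n(k))$ has a further subsequence with $\alpha_n(k_j)\to\alpha^*$ for some limiting configuration $\alpha^*$.

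The second step is convergence of the functional. The integrand $x\mapsto\min_{a\in\gamma}|x-a|^2$ is continuous but unbounded, so weak convergence does not apply directly; the remedy is truncation. For a cutoff level $R$ the truncated integrand is bounded and continuous, so weak convergence yields convergence of its integral, while the truncation error is dominated by $\int_{|x|>M}|x|^2\,dP_k$, which is uniformly small in $k$ by the moment bound of the first step. Combining these gives $V(P_k;\gamma)\to V(P;\gamma)$ uniformly for $\gamma$ ranging over configurations inside $K$, and together with joint continuity of $(\gamma,Q)\mapsto V(Q;\gamma)$ this also controls the moving configurations $\alpha_n(k_j)$.

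The third step identifies $\alpha^*$ as optimal and forces the limit. Using that $\alpha_n(k_j)$ is optimal for $P_{k_j}$ and the functional convergence just established,
\[
V(P;\alpha^*)=\lim_j V(P_{k_j};\alpha_n(k_j))\le\lim_j V(P_{k_j};\beta_n)=V(P;\beta_n)=V_n(P),
\]
while $V(P;\alpha^*)\ge V_n(P)$ by definition of the quantization error; hence $V(P;\alpha^*)=V_n(P)$, so $\alpha^*$ is itself an $n$-principal set for $P$. When $f$ is strongly unimodal — as the normal limit is — Theorem~\ref{Th0} guarantees the $n$-principal set is unique, so $\alpha^*=\beta_n$. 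Since every subsequence of $(\alpha_n(k))$ then has a further subsequence converging to the same limit $\beta_n$, the full sequence converges; labeling the points of each configuration in increasing order makes the correspondence with $\beta_n$ well defined, which is exactly the asserted convergence after a suitable labeling.

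The main obstacle is the second step. Weak convergence governs only integrals of bounded continuous functions, whereas the quantization cost is quadratic and hence unbounded, so the argument genuinely requires a uniform tail control to push the truncation through; weak convergence \emph{alone} is insufficient, and one must supply convergence (or at least uniform integrability) of second moments. This is precisely the hypothesis that is not automatic and that must be checked for the Student's $t$-to-normal application. A secondary subtlety lies in the third step: without uniqueness one obtains only subsequential convergence to the set of optimal configurations, so it is the strong unimodality of the limit, through Theorem~\ref{Th0}, that makes the clean statement $\alpha_n(k)\to\beta_n$ literally correct.
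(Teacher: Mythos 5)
The paper does not actually prove this statement: it is quoted verbatim as an external result, with a pointer to Pollard \cite{P2} (his Corollary~6), and no argument is given in the text. So the comparison here is with Pollard's proof rather than with anything in the paper, and your proposal is essentially a faithful reconstruction of it: confinement of the optimal configurations to a fixed compact set, uniform convergence of the distortion functional $\gamma \mapsto V(P_k;\gamma)$ on that compact set, and uniqueness of the limit's optimal set to upgrade subsequential convergence of minimizers to full convergence. Your most valuable observation is also correct and worth stressing: as stated in the paper (weak convergence of $f_k$ to $f$, nothing more), the theorem is not literally true, since the cost is quadratic and unbounded; Pollard's actual hypothesis is convergence in a sense that includes second moments (equivalently, uniform integrability of $|x|^2$ under $P_k$), together with uniqueness of the optimal set for the limit. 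Both hold in the intended application ($t_k \to$ normal, with the normal strongly unimodal so that Theorem~\ref{Th0} applies), which is why the authors can state the result loosely.

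Two details in your sketch deserve to be made explicit if this were written out. First, the confinement step is not quite the assertion that ``a far-away quantizer only increases the cost'': a point escaping to infinity does not raise the integral directly, it merely serves no mass, so the contradiction must come from the strict inequality $V_{n-1}(P) > V_n(P)$ (valid for continuous $P$ whose support contains at least $n$ points; see \cite{GL1}), combined with lower semicontinuity of the cost along the surviving $n-1$ points. Second, the uniformity over $\gamma \subset K$ in your truncation step needs a short equicontinuity argument — the truncated integrands $x \mapsto \min\bigl(\min_{a \in \gamma}|x-a|^2, R\bigr)$ form a uniformly bounded, uniformly Lipschitz family, so a covering argument converts pointwise weak-convergence limits into uniform ones. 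Both are standard, and with them your three-step argument is a complete and correct proof of the precise (Pollard) form of the statement.
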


\begin{center}
\begin{figure}[h]
\includegraphics[width=3in]{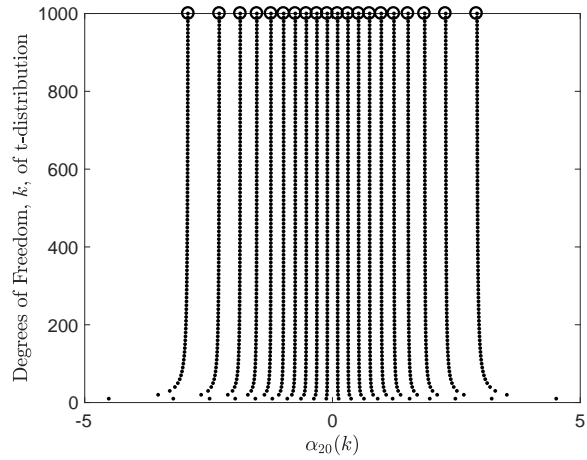}
\caption{Here we illustrate that the principal points of the t-distribution, depicted as black dots according to the degrees of freedom $k$, converge to those of the normal distribution depicted as circles at the top of the plot, in the limit as $k \to \infty$.}
\label{fig.TtoNormal}
\end{figure}
\end{center}


\appendix
\section{Tables}
In this appendix, we present principal points $\alpha_n$ for $n = 1, \, 2, \, \ldots, \, 16$ for several probability distributions. Although we only show four decimal places of precision, our numerical experiments were run to a precision of $10^{-15}$.

\subsection{Normal Distribution}
Table~\ref{tab.norm} presents principal points of the normal distribution
\begin{eqnarray*}
f(x) &=&
{1 \over
\sqrt{2 \pi} }
e^{-{x^2 \over 2}},
\end{eqnarray*}
defined over the domain $(-\infty, \, \infty)$. Note in the table that, since the distribution and domain are symmetric, the principal points are then symmetric about the mean.

\subsection{One-Sided Exponential Distribution}
Table~\ref{tab.exp1} presents principal points of the one-sided exponential distribution
\begin{eqnarray*}
f(x) &=& e^{-x},
\end{eqnarray*}
defined over the domain $[0, \, \infty)$.

\subsection{Double Exponential Distribution}
Table~\ref{tab.exp2} presents principal points of the double exponential distribution
\begin{eqnarray*}
f(x) &=& {1 \over 2} e^{-|x|},
\end{eqnarray*}
defined over the domain $(-\infty, \, \infty)$. 
Note that one can extract from the positive principal points of the even values of $n$ the principal points of the single sided exponential distribution $f(x) = e^{-x}$ over the domain $[0, \, \infty)$ for $n/2$. This is due to the symmetry property discussed in section~\ref{subsec.sym}. While the single-sided and double-sided distribution functions differ only in the coefficient $1/2$ over the intersection of their domains, it is clear from (\ref{eqn.nls}) that the coefficient does not affect the value of the principal points. 
One can observe this by comparing the values in Table~\ref{tab.exp1} and Table~\ref{tab.exp2}.


\subsection{Beta Distribution of the First Kind}
Table~\ref{tab.beta} presents principal points of the beta distribution of the first kind
\begin{eqnarray*}
f(x) &=& {1 \over \beta(r,\, s) } x^{r-1} (1-x)^{s-1}
\end{eqnarray*}
over the domain $[0, \, 1]$, where
\begin{eqnarray*}
\beta(r,\, s) &:=& \int_0^1 x^{r-1} (1-x)^{s-1} \, dx.
\end{eqnarray*}
In the table, the parameters are set to $r = 2$ and $s = 2$.

\subsection{Beta Distribution of the Second Kind}
Table~\ref{tab.beta2} presents principal points of the beta distribution of the second kind
\begin{eqnarray*}
f(x) &=& {1 \over \beta(r,\, s) } {x^{r-1}  \over (1+x)^{r+s}}
\end{eqnarray*}
over the domain $[0, \, \infty)$.
We require that $r > 0$ and $s > 2$ for $f(x)$ to be a probability density function.
Below are the results for $r = 1$ and $s = 3$.\\[1em]

\subsection{Gamma Distribution}
Table~\ref{tab.gamma} presents principal points of the gamma distribution.
\begin{eqnarray*}
f(x) &=& {1 \over a^b \Gamma(b) } x^{b-1} e^{-x/a}
\end{eqnarray*}
over the domain $[0, \, \infty)$.
Below are the results for $a = 1/\sqrt{2}$ and $b = 2$.

\subsection{Logistic Distribution}
Table~\ref{tab.log} presents principal points of the logistic distribution
\begin{eqnarray*}
f(x) &=& { e^{-|x|/a} \over a(1 + e^{-|x|/a})^2}
\end{eqnarray*}
over the domain $(-\infty, \, \infty)$.
The results in the table are for the parameter value set to $a = \sqrt{3}/\pi$.

\subsection{Student's t-distribution}
Table~\ref{tab.t} presents principal points of student's t-distribution function
\begin{eqnarray*}
f(x) &=& { \Gamma\left({k+1 \over 2}\right) \over \sqrt{k \pi} \Gamma\left({k \over 2}\right)}
\left( 1 + {x^2 \over k} \right)^{-\left( {k+1 \over 2} \right)}
\end{eqnarray*}
over the interval $(-\infty, \, \infty)$.
The table demonstrates results for $k=3$. Note that we require $k \ge 3$ for the probability, expected value, and variance formula to be well defined.

\pagebreak

\subsection{Tables of Computed Principal points} \hspace{1em}


\begin{table}[tbhp]
\centering
\begin{tabular}{|r|r|r|r|r|r|r|r|r|} 
 \hline 
$n$  & 1  & 2  & 3  & 4  & 5  & 6  & 7  & 8 \\ 
 \hline 
 $a_{1}$& 0 &  -0.7979 &  -1.2240 &  -1.5104 &  -1.7241 &  -1.8936 &  -2.0334 &  -2.1519 \\ 
 $a_{2}$& &   0.7979 & 0 &  -0.4528 &  -0.7646 &  -1.0001 &  -1.1881 &  -1.3439 \\ 
 $a_{3}$& & &   1.2240 &   0.4528 & 0 &  -0.3177 &  -0.5606 &  -0.7560 \\ 
 $a_{4}$& & & &   1.5104 &   0.7646 &   0.3177 & 0 &  -0.2451 \\ 
 $a_{5}$& & & & &   1.7241 &   1.0001 &   0.5606 &   0.2451 \\ 
 $a_{6}$& & & & & &   1.8936 &   1.1881 &   0.7560 \\ 
 $a_{7}$& & & & & & &   2.0334 &   1.3439 \\ 
 $a_{8}$& & & & & & & &   2.1519 \\ 
\hline 
 $V_n(P)$ &   1.0000 &   0.3634 &   0.1902 &   0.1175 &   0.0799 &   0.0580 &   0.0440 &   0.0345 \\ \hline \hline 
n  & 9  & 10  & 11  & 12  & 13  & 14  & 15  & 16 \\ 
 \hline  
 $a_{1}$&  -2.2547 &  -2.3451 &  -2.4257 &  -2.4984 &  -2.5645 &  -2.6251 &  -2.6809 &  -2.7326 \\ 
$ a_{2}$&  -1.4764 &  -1.5913 &  -1.6926 &  -1.7830 &  -1.8645 &  -1.9386 &  -2.0065 &  -2.0690 \\ 
 $a_{3}$&  -0.9188 &  -1.0578 &  -1.1788 &  -1.2857 &  -1.3813 &  -1.4675 &  -1.5461 &  -1.6180 \\ 
$ a_{4}$&  -0.4436 &  -0.6099 &  -0.7524 &  -0.8768 &  -0.9869 &  -1.0856 &  -1.1749 &  -1.2562 \\ 
 $a_{5}$& 0 &  -0.1996 &  -0.3675 &  -0.5118 &  -0.6383 &  -0.7504 &  -0.8511 &  -0.9423 \\ 
 $a_{6}$&   0.4436 &   0.1996 & 0 &  -0.1684 &  -0.3138 &  -0.4413 &  -0.5548 &  -0.6568 \\ 
 $a_{7}$&   0.9188 &   0.6099 &   0.3675 &   0.1684 & 0 &  -0.1457 &  -0.2739 &  -0.3880 \\ 
 $a_{8}$&   1.4764 &   1.0578 &   0.7524 &   0.5118 &   0.3138 &   0.1457 & 0 &  -0.1284 \\ 
 $a_{9}$&   2.2547 &   1.5913 &   1.1788 &   0.8768 &   0.6383 &   0.4413 &   0.2739 &   0.1284 \\ 
 $a_{10}$& &   2.3451 &   1.6926 &   1.2857 &   0.9869 &   0.7504 &   0.5548 &   0.3880 \\ 
 $a_{11}$& & &   2.4257 &   1.7830 &   1.3813 &   1.0856 &   0.8511 &   0.6568 \\ 
 $a_{12}$& & & &   2.4984 &   1.8645 &   1.4675 &   1.1749 &   0.9423 \\ 
 $a_{13}$& & & & &   2.5645 &   1.9386 &   1.5461 &   1.2562 \\ 
 $a_{14}$& & & & & &   2.6251 &   2.0065 &   1.6180 \\ 
 $a_{15}$& & & & & & &   2.6809 &   2.0690 \\ 
 $a_{16}$& & & & & & & &   2.7326 \\ 
\hline 
 $V_n(P)$ &   0.0279 &   0.0229 &   0.0192 &   0.0163 &   0.0141 &   0.0122 &   0.0107 &   0.0095 \\ 
\hline 
 \end{tabular} 

\caption{Principal points for $n = 1$ through $n = 16$ for the {\bf normal distribution} $f(x) = (1/\sqrt{2 \pi}) e^{-x^2/2}$ on the domain $(-\infty, \, \infty)$.}
\label{tab.norm}
\end{table}
\pagebreak

\hspace{1em}


\begin{table}[tbhp]
\centering
$$
\begin{array}{|r|r|r|r|r|r|r|r|r|} 
 \hline 
n  & 1  & 2  & 3  & 4  & 5  & 6  & 7  & 8 \\ 
 \hline 
 a_{1}&   1.0000 &   0.5936 &   0.4240 &   0.3301 &   0.2704 &   0.2290 &   0.1986 &   0.1753 \\ 
 a_{2}& &   2.5936 &   1.6112 &   1.1780 &   0.9305 &   0.7697 &   0.6565 &   0.5725 \\ 
 a_{3}& & &   3.6112 &   2.3652 &   1.7784 &   1.4298 &   1.1972 &   1.0305 \\ 
 a_{4}& & & &   4.3652 &   2.9657 &   2.2777 &   1.8574 &   1.5712 \\ 
 a_{5}& & & & &   4.9657 &   3.4650 &   2.7053 &   2.2313 \\ 
 a_{6}& & & & & &   5.4650 &   3.8925 &   3.0792 \\ 
 a_{7}& & & & & & &   5.8925 &   4.2665 \\ 
 a_{8}& & & & & & & &   6.2665 \\ 
\hline 
 V_n(P) &   1.0000 &   0.3524 &   0.1797 &   0.1090 &   0.0731 &   0.0524 &   0.0394 &   0.0307 \\ \hline \hline 
n  & 9  & 10  & 11  & 12  & 13  & 14  & 15  & 16 \\ 
 \hline 
 a_{1}&   0.1570 &   0.1421 &   0.1298 &   0.1194 &   0.1106 &   0.1030 &   0.0964 &   0.0906 \\ 
 a_{2}&   0.5077 &   0.4560 &   0.4140 &   0.3790 &   0.3495 &   0.3243 &   0.3025 &   0.2834 \\ 
 a_{3}&   0.9048 &   0.8067 &   0.7279 &   0.6632 &   0.6091 &   0.5632 &   0.5237 &   0.4894 \\ 
 a_{4}&   1.3628 &   1.2039 &   1.0786 &   0.9771 &   0.8933 &   0.8227 &   0.7626 &   0.7107 \\ 
 a_{5}&   1.9035 &   1.6618 &   1.4758 &   1.3278 &   1.2072 &   1.1069 &   1.0222 &   0.9496 \\ 
 a_{6}&   2.5636 &   2.2025 &   1.9337 &   1.7250 &   1.5579 &   1.4208 &   1.3063 &   1.2091 \\ 
 a_{7}&   3.4115 &   2.8627 &   2.4744 &   2.1829 &   1.9551 &   1.7715 &   1.6203 &   1.4933 \\ 
 a_{8}&   4.5988 &   3.7106 &   3.1346 &   2.7236 &   2.4130 &   2.1687 &   1.9710 &   1.8073 \\ 
 a_{9}&   6.5988 &   4.8979 &   3.9825 &   3.3838 &   2.9537 &   2.6267 &   2.3681 &   2.1579 \\ 
 a_{10}& &   6.8979 &   5.1697 &   4.2317 &   3.6139 &   3.1674 &   2.8261 &   2.5551 \\ 
 a_{11}& & &   7.1697 &   5.4189 &   4.4618 &   3.8275 &   3.3668 &   3.0131 \\ 
 a_{12}& & & &   7.4189 &   5.6490 &   4.6754 &   4.0269 &   3.5538 \\ 
 a_{13}& & & & &   7.6490 &   5.8627 &   4.8749 &   4.2139 \\ 
 a_{14}& & & & & &   7.8627 &   6.0621 &   5.0618 \\ 
 a_{15}& & & & & & &   8.0621 &   6.2491 \\ 
 a_{16}& & & & & & & &   8.2491 \\ 
\hline 
 V_n(P) &   0.0246 &   0.0202 &   0.0168 &   0.0143 &   0.0122 &   0.0106 &   0.0093 &   0.0082 \\ 
\hline 
 \end{array} 

$$
\caption{Principal points for $n = 1$ through $n = 16$ for the {\bf exponential distribution} $
f(x) = e^{-x}$ on the domain $[0, \, \infty)$.}
\label{tab.exp1}
\end{table}

\pagebreak

\hspace{1em}


\begin{table}[tbhp]
\centering
$$
\begin{array}{|r|r|r|r|r|r|r|r|r|} 
 \hline 
n  & 1  & 2  & 3  & 4  & 5  & 6  & 7  & 8 \\ 
 \hline 
 a_{1}& 0 &  -1.0000 & -2 &  -2.5936 &  -3.1872 &  -3.6112 &  -4.0352 &  -4.3652 \\ 
 a_{2}& &   1.0000 & 0 &  -0.5936 &  -1.1872 &  -1.6112 &  -2.0352 &  -2.3652 \\ 
 a_{3}& & & 2 &   0.5936 & 0 &  -0.4240 &  -0.8479 &  -1.1780 \\ 
 a_{4}& & & &   2.5936 &   1.1872 &   0.4240 & 0 &  -0.3301 \\ 
 a_{5}& & & & &   3.1872 &   1.6112 &   0.8479 &   0.3301 \\ 
 a_{6}& & & & & &   3.6112 &   2.0352 &   1.1780 \\ 
 a_{7}& & & & & & &   4.0352 &   2.3652 \\ 
 a_{8}& & & & & & & &   4.3652 \\ 
\hline 
 V_n(P) &   2.0000 &   1.0000 &   0.5285 &   0.3524 &   0.2396 &   0.1797 &   0.1362 &   0.1090 \\ \hline \hline 
n  & 9  & 10  & 11  & 12  & 13  & 14  & 15  & 16 \\ 
 \hline 
 a_{1}&  -4.6953 &  -4.9657 &  -5.2360 &  -5.4650 &  -5.6940 &  -5.8925 &  -6.0911 &  -6.2665 \\ 
 a_{2}&  -2.6953 &  -2.9657 &  -3.2360 &  -3.4650 &  -3.6940 &  -3.8925 &  -4.0911 &  -4.2665 \\ 
 a_{3}&  -1.5081 &  -1.7784 &  -2.0488 &  -2.2777 &  -2.5067 &  -2.7053 &  -2.9039 &  -3.0792 \\ 
 a_{4}&  -0.6602 &  -0.9305 &  -1.2009 &  -1.4298 &  -1.6588 &  -1.8574 &  -2.0560 &  -2.2313 \\ 
 a_{5}& 0 &  -0.2704 &  -0.5407 &  -0.7697 &  -0.9986 &  -1.1972 &  -1.3958 &  -1.5712 \\ 
 a_{6}&   0.6602 &   0.2704 & 0 &  -0.2290 &  -0.4579 &  -0.6565 &  -0.8551 &  -1.0305 \\ 
 a_{7}&   1.5081 &   0.9305 &   0.5407 &   0.2290 & 0 &  -0.1986 &  -0.3972 &  -0.5725 \\ 
 a_{8}&   2.6953 &   1.7784 &   1.2009 &   0.7697 &   0.4579 &   0.1986 & 0 &  -0.1753 \\ 
 a_{9}&   4.6953 &   2.9657 &   2.0488 &   1.4298 &   0.9986 &   0.6565 &   0.3972 &   0.1753 \\ 
 a_{10}& &   4.9657 &   3.2360 &   2.2777 &   1.6588 &   1.1972 &   0.8551 &   0.5725 \\ 
 a_{11}& & &   5.2360 &   3.4650 &   2.5067 &   1.8574 &   1.3958 &   1.0305 \\ 
 a_{12}& & & &   5.4650 &   3.6940 &   2.7053 &   2.0560 &   1.5712 \\ 
 a_{13}& & & & &   5.6940 &   3.8925 &   2.9039 &   2.2313 \\ 
 a_{14}& & & & & &   5.8925 &   4.0911 &   3.0792 \\ 
 a_{15}& & & & & & &   6.0911 &   4.2665 \\ 
 a_{16}& & & & & & & &   6.2665 \\ 
\hline 
 V_n(P) &   0.0877 &   0.0731 &   0.0612 &   0.0524 &   0.0451 &   0.0394 &   0.0346 &   0.0307 \\ 
\hline 
 \end{array} 

$$
\caption{Principal points for $n = 1$ through $n = 16$ for the {\bf double exponential distribution} $f(x) = (1/2) e^{-|x|}$ on the domain $(-\infty, \, \infty)$. Note that one can extract from the positive entries of the even values of $n$ the principal points of the single exponential distribution for $n/2$, shown in Table~\ref{tab.exp1}}
\label{tab.exp2}
\end{table}

\pagebreak


\hspace{1em}


\begin{table}[tbhp]
\centering
$$
\begin{array}{|r|r|r|r|r|r|r|r|r|} 
 \hline 
n  & 1  & 2  & 3  & 4  & 5  & 6  & 7  & 8 \\ 
 \hline 
 a_{1}&   0.5000 &   0.3125 &   0.2351 &   0.1914 &   0.1630 &   0.1428 &   0.1276 &   0.1157 \\ 
 a_{2}& &   0.6875 &   0.5000 &   0.4011 &   0.3386 &   0.2949 &   0.2625 &   0.2374 \\ 
 a_{3}& & &   0.7649 &   0.5989 &   0.5000 &   0.4328 &   0.3836 &   0.3458 \\ 
 a_{4}& & & &   0.8086 &   0.6614 &   0.5672 &   0.5000 &   0.4491 \\ 
 a_{5}& & & & &   0.8370 &   0.7051 &   0.6164 &   0.5509 \\ 
 a_{6}& & & & & &   0.8572 &   0.7375 &   0.6542 \\ 
 a_{7}& & & & & & &   0.8724 &   0.7626 \\ 
 a_{8}& & & & & & & &   0.8843 \\ 
\hline 
 V_n(P) &   0.0500 &   0.0148 &   0.0071 &   0.0041 &   0.0027 &   0.0019 &   0.0014 &   0.0011 \\ \hline \hline 
n  & 9  & 10  & 11  & 12  & 13  & 14  & 15  & 16 \\ 
 \hline 
 a_{1}&   0.1061 &   0.0982 &   0.0916 &   0.0859 &   0.0809 &   0.0766 &   0.0728 &   0.0694 \\ 
 a_{2}&   0.2172 &   0.2006 &   0.1867 &   0.1749 &   0.1646 &   0.1557 &   0.1478 &   0.1408 \\ 
 a_{3}&   0.3157 &   0.2911 &   0.2705 &   0.2531 &   0.2380 &   0.2249 &   0.2134 &   0.2031 \\ 
 a_{4}&   0.4089 &   0.3763 &   0.3492 &   0.3262 &   0.3065 &   0.2894 &   0.2743 &   0.2610 \\ 
 a_{5}&   0.5000 &   0.4590 &   0.4252 &   0.3967 &   0.3723 &   0.3511 &   0.3326 &   0.3162 \\ 
 a_{6}&   0.5911 &   0.5410 &   0.5000 &   0.4657 &   0.4365 &   0.4113 &   0.3892 &   0.3698 \\ 
 a_{7}&   0.6843 &   0.6237 &   0.5748 &   0.5343 &   0.5000 &   0.4705 &   0.4448 &   0.4222 \\ 
 a_{8}&   0.7828 &   0.7089 &   0.6508 &   0.6033 &   0.5635 &   0.5295 &   0.5000 &   0.4741 \\ 
 a_{9}&   0.8939 &   0.7994 &   0.7295 &   0.6738 &   0.6277 &   0.5887 &   0.5552 &   0.5259 \\ 
 a_{10}& &   0.9018 &   0.8133 &   0.7469 &   0.6935 &   0.6489 &   0.6108 &   0.5778 \\ 
 a_{11}& & &   0.9084 &   0.8251 &   0.7620 &   0.7106 &   0.6674 &   0.6302 \\ 
 a_{12}& & & &   0.9141 &   0.8354 &   0.7751 &   0.7257 &   0.6838 \\ 
 a_{13}& & & & &   0.9191 &   0.8443 &   0.7866 &   0.7390 \\ 
 a_{14}& & & & & &   0.9234 &   0.8522 &   0.7969 \\ 
 a_{15}& & & & & & &   0.9272 &   0.8592 \\ 
 a_{16}& & & & & & & &   0.9306 \\ 
\hline 
 V_n(P) &   0.0009 &   0.0007 &   0.0006 &   0.0005 &   0.0004 &   0.0004 &   0.0003 &   0.0003 \\ 
\hline 
 \end{array} 

$$
\caption{Principal points for $n = 1$ through $n = 16$ for the beta distribution of the first kind $f(x) = \beta(r,\, s)^{-1} x^{r-1} (1-x)^{s-1}$, where $\beta(r,\, s) = \int_0^1 x^{r-1} (1-x)^{s-1} \, dx$ on the domain $[0, \, 1]$. For this table, the parameters $r = 2$ and $s = 2$ were used.}
\label{tab.beta}
\end{table}

\pagebreak

\hspace{1em}


\begin{table}[tbhp]
\centering
$$
\begin{array}{|r|r|r|r|r|r|r|r|r|} 
 \hline 
n  & 1  & 2  & 3  & 4  & 5  & 6  & 7  & 8 \\ 
 \hline 
 a_{1}&   0.5000 &   0.3660 &   0.2912 &   0.2426 &   0.2081 &   0.1824 &   0.1624 &   0.1464 \\ 
 a_{2}& &   3.0981 &   1.7822 &   1.2637 &   0.9819 &   0.8038 &   0.6809 &   0.5908 \\ 
 a_{3}& & &   7.3466 &   3.8776 &   2.6106 &   1.9591 &   1.5644 &   1.3004 \\ 
 a_{4}& & & &  13.6327 &   6.7797 &   4.3909 &   3.2068 &   2.5095 \\ 
 a_{5}& & & & &  22.3390 &  10.6159 &   6.6639 &   4.7572 \\ 
 a_{6}& & & & & &  33.8476 &  15.5134 &   9.4884 \\ 
 a_{7}& & & & & & &  48.5401 &  21.5994 \\ 
 a_{8}& & & & & & & &  66.7983 \\ 
\hline 
 V_n(P) &   0.7500 &   0.4019 &   0.2544 &   0.1765 &   0.1299 &   0.0998 &   0.0791 &   0.0643 \\ \hline \hline 
n  & 9  & 10  & 11  & 12  & 13  & 14  & 15  & 16 \\ 
 \hline 
 a_{1}&   0.1332 &   0.1223 &   0.1130 &   0.1051 &   0.0982 &   0.0921 &   0.0868 &   0.0820 \\ 
 a_{2}&   0.5218 &   0.4673 &   0.4232 &   0.3867 &   0.3560 &   0.3298 &   0.3073 &   0.2876 \\ 
 a_{3}&   1.1118 &   0.9704 &   0.8607 &   0.7731 &   0.7016 &   0.6421 &   0.5918 &   0.5488 \\ 
 a_{4}&   2.0538 &   1.7343 &   1.4987 &   1.3182 &   1.1757 &   1.0606 &   0.9656 &   0.8860 \\ 
 a_{5}&   3.6589 &   2.9541 &   2.4674 &   2.1131 &   1.8446 &   1.6348 &   1.4665 &   1.3288 \\ 
 a_{6}&   6.6428 &   5.0324 &   4.0142 &   3.3200 &   2.8200 &   2.4448 &   2.1538 &   1.9223 \\ 
 a_{7}&  12.9236 &   8.8960 &   6.6497 &   5.2471 &   4.3009 &   3.6259 &   3.1234 &   2.7366 \\ 
 a_{8}&  29.0012 &  17.0284 &  11.5491 &   8.5306 &   6.6656 &   5.4192 &   4.5373 &   3.8854 \\ 
 a_{9}&  89.0035 &  37.8458 &  21.8619 &  14.6346 &  10.6948 &   8.2828 &   6.6839 &   5.5605 \\ 
 a_{10}& & 115.5374 &  48.2605 &  27.4830 &  18.1849 &  13.1620 &  10.1116 &   8.1038 \\ 
 a_{11}& & & 146.7815 &  60.3724 &  33.9509 &  22.2323 &  15.9521 &  12.1650 \\ 
 a_{12}& & & & 183.1172 &  74.3087 &  41.3245 &  26.8093 &  19.0847 \\ 
 a_{13}& & & & & 224.9261 &  90.1965 &  49.6628 &  31.9482 \\ 
 a_{14}& & & & & & 272.5896 & 108.1631 &  59.0248 \\ 
 a_{15}& & & & & & & 326.4892 & 128.3355 \\ 
 a_{16}& & & & & & & & 387.0065 \\ 
\hline 
 V_n(P) &   0.0533 &   0.0449 &   0.0383 &   0.0331 &   0.0289 &   0.0254 &   0.0226 &   0.0202 \\ 
\hline 
 \end{array} 

$$
\caption{Principal points for $n = 1$ through $n = 16$ for the beta distribution of the second kind, $f(x) =  \beta(r,\, s)^{-1} {x^{r-1}  / (1+x)^{r+s}}$ on the domain $[0, \, \infty)$. For this table, the parameters $r = 1$ and $s = 3$ were used.}
\label{tab.beta2}
\end{table}
\pagebreak
\hspace{1em}


\begin{table}[tbhp]
\centering
$$
\begin{array}{|r|r|r|r|r|r|r|r|r|} 
 \hline 
n  & 1  & 2  & 3  & 4  & 5  & 6  & 7  & 8 \\ 
 \hline 
 a_{1}&   1.4142 &   0.9271 &   0.7108 &   0.5847 &   0.5009 &   0.4407 &   0.3950 &   0.3590 \\ 
 a_{2}& &   2.7353 &   1.8420 &   1.4269 &   1.1798 &   1.0136 &   0.8932 &   0.8014 \\ 
 a_{3}& & &   3.5501 &   2.4815 &   1.9577 &   1.6363 &   1.4157 &   1.2537 \\ 
 a_{4}& & & &   4.1445 &   2.9772 &   2.3842 &   2.0119 &   1.7523 \\ 
 a_{5}& & & & &   4.6135 &   3.3829 &   2.7420 &   2.3325 \\ 
 a_{6}& & & & & &   5.0012 &   3.7266 &   3.0505 \\ 
 a_{7}& & & & & & &   5.3318 &   4.0248 \\ 
 a_{8}& & & & & & & &   5.6199 \\ 
\hline 
 V_n(P) &   1.0000 &   0.3565 &   0.1836 &   0.1120 &   0.0755 &   0.0544 &   0.0410 &   0.0321 \\ \hline \hline 
n  & 9  & 10  & 11  & 12  & 13  & 14  & 15  & 16 \\ 
 \hline 
 a_{1}&   0.3299 &   0.3057 &   0.2853 &   0.2678 &   0.2526 &   0.2393 &   0.2275 &   0.2169 \\ 
 a_{2}&   0.7288 &   0.6698 &   0.6208 &   0.5794 &   0.5439 &   0.5130 &   0.4859 &   0.4619 \\ 
 a_{3}&   1.1289 &   1.0295 &   0.9482 &   0.8804 &   0.8228 &   0.7731 &   0.7299 &   0.6919 \\ 
 a_{4}&   1.5592 &   1.4092 &   1.2889 &   1.1898 &   1.1067 &   1.0359 &   0.9747 &   0.9213 \\ 
 a_{5}&   2.0433 &   1.8261 &   1.6561 &   1.5187 &   1.4051 &   1.3094 &   1.2275 &   1.1565 \\ 
 a_{6}&   2.6127 &   2.3003 &   2.0638 &   1.8774 &   1.7259 &   1.6001 &   1.4937 &   1.4023 \\ 
 a_{7}&   3.3218 &   2.8618 &   2.5307 &   2.2783 &   2.0782 &   1.9150 &   1.7788 &   1.6631 \\ 
 a_{8}&   4.2883 &   3.5641 &   3.0860 &   2.7396 &   2.4740 &   2.2624 &   2.0890 &   1.9438 \\ 
 a_{9}&   5.8753 &   4.5243 &   3.7830 &   3.2901 &   2.9308 &   2.6540 &   2.4324 &   2.2502 \\ 
 a_{10}& &   6.1046 &   4.7380 &   3.9825 &   3.4773 &   3.1071 &   2.8206 &   2.5905 \\ 
 a_{11}& & &   6.3126 &   4.9332 &   4.1660 &   3.6502 &   3.2706 &   2.9758 \\ 
 a_{12}& & & &   6.5030 &   5.1129 &   4.3357 &   3.8109 &   3.4232 \\ 
 a_{13}& & & & &   6.6786 &   5.2794 &   4.4936 &   3.9609 \\ 
 a_{14}& & & & & &   6.8414 &   5.4345 &   4.6413 \\ 
 a_{15}& & & & & & &   6.9932 &   5.5796 \\ 
 a_{16}& & & & & & & &   7.1354 \\ 
\hline 
 V_n(P) &   0.0258 &   0.0211 &   0.0177 &   0.0150 &   0.0129 &   0.0112 &   0.0098 &   0.0086 \\ 
\hline 
 \end{array} 

$$
\caption{Principal points for $n = 1$ through $n = 16$ for the gamma distribution $f(x) = (a^b \Gamma(b) )^{-1} x^{b-1} e^{-x/a}$ on the domain $[0, \, \infty)$. For this table, the parameters $a = 1/\sqrt{2}$ and $b = 2$ were used.}
\label{tab.gamma}
\end{table}

\pagebreak
%
\hspace{1em}


\begin{table}[tbhp]
\centering
$$
\begin{array}{|r|r|r|r|r|r|r|r|r|} 
 \hline 
n  & 1  & 2  & 3  & 4  & 5  & 6  & 7  & 8 \\ 
 \hline 
 a_{1}& 0 &  -0.7643 &  -1.2621 &  -1.6382 &  -1.9422 &  -2.1978 &  -2.4185 &  -2.6129 \\ 
 a_{2}& &   0.7643 & 0 &  -0.4569 &  -0.7947 &  -1.0671 &  -1.2971 &  -1.4971 \\ 
 a_{3}& & &   1.2621 &   0.4569 & 0 &  -0.3270 &  -0.5862 &  -0.8033 \\ 
 a_{4}& & & &   1.6382 &   0.7947 &   0.3270 & 0 &  -0.2548 \\ 
 a_{5}& & & & &   1.9422 &   1.0671 &   0.5862 &   0.2548 \\ 
 a_{6}& & & & & &   2.1978 &   1.2971 &   0.8033 \\ 
 a_{7}& & & & & & &   2.4185 &   1.4971 \\ 
 a_{8}& & & & & & & &   2.6129 \\ 
\hline 
 V_n(P) &   1.0000 &   0.4158 &   0.2307 &   0.1472 &   0.1022 &   0.0752 &   0.0576 &   0.0456 \\ \hline \hline 
n  & 9  & 10  & 11  & 12  & 13  & 14  & 15  & 16 \\ 
 \hline 
 a_{1}&  -2.7866 &  -2.9436 &  -3.0869 &  -3.2187 &  -3.3407 &  -3.4543 &  -3.5606 &  -3.6604 \\ 
 a_{2}&  -1.6743 &  -1.8337 &  -1.9787 &  -2.1117 &  -2.2345 &  -2.3488 &  -2.4555 &  -2.5558 \\ 
 a_{3}&  -0.9913 &  -1.1579 &  -1.3077 &  -1.4442 &  -1.5697 &  -1.6859 &  -1.7941 &  -1.8954 \\ 
 a_{4}&  -0.4658 &  -0.6473 &  -0.8073 &  -0.9510 &  -1.0816 &  -1.2017 &  -1.3129 &  -1.4165 \\ 
 a_{5}& 0 &  -0.2088 &  -0.3870 &  -0.5432 &  -0.6829 &  -0.8096 &  -0.9259 &  -1.0334 \\ 
 a_{6}&   0.4658 &   0.2088 & 0 &  -0.1769 &  -0.3311 &  -0.4685 &  -0.5926 &  -0.7062 \\ 
 a_{7}&   0.9913 &   0.6473 &   0.3870 &   0.1769 & 0 &  -0.1534 &  -0.2895 &  -0.4121 \\ 
 a_{8}&   1.6743 &   1.1579 &   0.8073 &   0.5432 &   0.3311 &   0.1534 & 0 &  -0.1355 \\ 
 a_{9}&   2.7866 &   1.8337 &   1.3077 &   0.9510 &   0.6829 &   0.4685 &   0.2895 &   0.1355 \\ 
 a_{10}& &   2.9436 &   1.9787 &   1.4442 &   1.0816 &   0.8096 &   0.5926 &   0.4121 \\ 
 a_{11}& & &   3.0869 &   2.1117 &   1.5697 &   1.2017 &   0.9259 &   0.7062 \\ 
 a_{12}& & & &   3.2187 &   2.2345 &   1.6859 &   1.3129 &   1.0334 \\ 
 a_{13}& & & & &   3.3407 &   2.3488 &   1.7941 &   1.4165 \\ 
 a_{14}& & & & & &   3.4543 &   2.4555 &   1.8954 \\ 
 a_{15}& & & & & & &   3.5606 &   2.5558 \\ 
 a_{16}& & & & & & & &   3.6604 \\ 
\hline 
 V_n(P) &   0.0370 &   0.0306 &   0.0257 &   0.0220 &   0.0189 &   0.0165 &   0.0145 &   0.0129 \\ 
\hline 
 \end{array} 

$$
\caption{Principal points for $n = 1$ through $n = 16$ for the logistic distribution $f(x) =  e^{-|x|/a} /  a(1 + e^{-|x|/a})^2$ on the domain $(-\infty, \, \infty)$.}
\label{tab.log}
\end{table}

\pagebreak
%
%
\hspace{1em}


\begin{table}[tbhp]
\centering
$$
\begin{array}{|r|r|r|r|r|r|r|r|r|} 
 \hline 
n  & 1  & 2  & 3  & 4  & 5  & 6  & 7  & 8 \\ 
 \hline 
 a_{1}& 0 &  -1.1027 &  -2.2865 &  -3.7616 &  -5.6124 &  -7.8980 & -10.6700 & -13.9774 \\ 
 a_{2}& &   1.1027 & 0 &  -0.7815 &  -1.5520 &  -2.4054 &  -3.3883 &  -4.5305 \\ 
 a_{3}& & &   2.2865 &   0.7815 & 0 &  -0.6173 &  -1.2074 &  -1.8255 \\ 
 a_{4}& & & &   3.7616 &   1.5520 &   0.6173 & 0 &  -0.5127 \\ 
 a_{5}& & & & &   5.6124 &   2.4054 &   1.2074 &   0.5127 \\ 
 a_{6}& & & & & &   7.8980 &   3.3883 &   1.8255 \\ 
 a_{7}& & & & & & &  10.6700 &   4.5305 \\ 
 a_{8}& & & & & & & &  13.9774 \\ 
\hline 
 V_n(P) &   3.0000 &   1.7841 &   1.2439 &   0.9302 &   0.7264 &   0.5849 &   0.4820 &   0.4045 \\ \hline \hline 
n  & 9  & 10  & 11  & 12  & 13  & 14  & 15  & 16 \\ 
 \hline 
 a_{1}& -17.8678 & -22.3881 & -27.5850 & -33.5049 & -40.1942 & -47.6992 & -56.0662 & -65.3412 \\ 
 a_{2}&  -5.8553 &  -7.3823 &  -9.1298 & -11.1146 & -13.3533 & -15.8620 & -18.6566 & -21.7529 \\ 
 a_{3}&  -2.5014 &  -3.2540 &  -4.0976 &  -5.0435 &  -6.1016 &  -7.2810 &  -8.5900 & -10.0367 \\ 
 a_{4}&  -0.9970 &  -1.4892 &  -2.0097 &  -2.5720 &  -3.1859 &  -3.8592 &  -4.5983 &  -5.4091 \\ 
 a_{5}& 0 &  -0.4392 &  -0.8526 &  -1.2651 &  -1.6918 &  -2.1428 &  -2.6253 &  -3.1452 \\ 
 a_{6}&   0.9970 &   0.4392 & 0 &  -0.3846 &  -0.7463 &  -1.1034 &  -1.4671 &  -1.8454 \\ 
 a_{7}&   2.5014 &   1.4892 &   0.8526 &   0.3846 & 0 &  -0.3422 &  -0.6644 &  -0.9803 \\ 
 a_{8}&   5.8553 &   3.2540 &   2.0097 &   1.2651 &   0.7463 &   0.3422 & 0 &  -0.3083 \\ 
 a_{9}&  17.8678 &   7.3823 &   4.0976 &   2.5720 &   1.6918 &   1.1034 &   0.6644 &   0.3083 \\ 
 a_{10}& &  22.3881 &   9.1298 &   5.0435 &   3.1859 &   2.1428 &   1.4671 &   0.9803 \\ 
 a_{11}& & &  27.5850 &  11.1146 &   6.1016 &   3.8592 &   2.6253 &   1.8454 \\ 
 a_{12}& & & &  33.5049 &  13.3533 &   7.2810 &   4.5983 &   3.1452 \\ 
 a_{13}& & & & &  40.1942 &  15.8620 &   8.5900 &   5.4091 \\ 
 a_{14}& & & & & &  47.6992 &  18.6566 &  10.0367 \\ 
 a_{15}& & & & & & &  56.0662 &  21.7529 \\ 
 a_{16}& & & & & & & &  65.3412 \\ 
\hline 
 V_n(P) &   0.3447 &   0.2973 &   0.2593 &   0.2281 &   0.2023 &   0.1807 &   0.1624 &   0.1468 \\ 
\hline 
 \end{array} 

$$
\caption{Principal points for $n = 1$ through $n = 16$ for the t-distribution $f(x) = \Gamma\left({k+1 \over 2}\right) (\sqrt{k \pi} \Gamma\left(k / 2 \right))^{-1}
\left( 1 + x^2 / k \right)^{-\left( k+1 \right)/2}$
on the domain $(-\infty, \, \infty)$. For this table, the parameter $k = 3$. Note that we require $k \ge 3$ for the probability, expected value, and variance formula to be well defined. }
\label{tab.t}
\end{table}



\clearpage

\end{document}